\newtheorem{theorem}{Theorem}[section]
\newtheorem{corollary}[theorem]{Corollary}
\newtheorem{lemma}[theorem]{Lemma}
\newtheorem{remark}[theorem]{Remark}
\numberwithin{equation}{section}
\newenvironment{proof}[1][Proof]{\noindent\textbf{#1.} }{\ \rule{0.5em}{0.5em}}
\begin{document}
\title{On a sharp volume estimate for gradient Ricci solitons with scalar curvature bounded below}

\author{Shijin Zhang \thanks{Shijin Zhang is currently a visiting PhD student at the Department of Mathematics UCSD. He is partially supported by China Scholarship Council.}\\Chern Institute of Mathematics, Nankai University,\\Tianjin, 300071, P.R.China\\Department of Mathematics,University of California
at San Diego, \\La Jolla, CA 92093,USA\\shijin\_zhang@yahoo.com}
\maketitle

\begin{abstract}
In this note, we obtain a sharp volume estimate for complete gradient Ricci
solitons with scalar curvature bounded below by a positive constant.
Using Chen-Yokota's argument we obtain a local lower bound estimate of
the scalar curvature for the Ricci flow on complete
manifolds. Consequently, one has a sharp estimate of
the scalar curvature for expanding Ricci solitons; we also
provide a direct (elliptic) proof of this sharp estimate.
Moreover, if the scalar curvature attains its minimum
value at some point, then the manifold is Einstein.
\end{abstract}

\section*{Introduction}
The Ricci flow $\frac{\partial}{\partial t}g(x,t)=-2Ric(x,t)$, was
introduced by Hamilton in \cite{H}. We say that a quadruple
$(M^{n},g,f,\varepsilon)$, where $(M^{n},g)$ is a Riemannian
manifold, $f$ is a smooth function on $M^{n}$ and
$\varepsilon\in\mathbb{R}$, is a gradient Ricci soliton if
\begin{equation}
R_{ij}+\nabla_{i}\nabla_{j}f+\frac{\varepsilon}{2}g_{ij}=0.
\end{equation}
We call $f$ the potential function. We say that $g$ is shrinking,
steady, or expanding if $\varepsilon < 0$, $\varepsilon=0$, or
$\varepsilon>0$, respectively.

The following volume growth estimate for complete shrinking gradient
Ricci solitons was proved by O. Munteanu \cite{M}, with an important special case was proved
by H.-D. Cao and D.-T. Zhou \cite{CZ}. Let
$(M^{n},g,f,-1)$ be a complete shrinking gradient Ricci soliton.
Given $o\in M^{n}$, there exists a constant $C<\infty$ such that
\begin{center}
$V(B(o,r))\leq C(r+1)^{n}$
\end{center}
for all $r\geq 0$, where $B(o,r)$ is the ball of radius $r$ at
center $o$ and $V(B(o,r))$ denotes the volume of $B(o,r)$ with
respect to the metric $g$. From the proof of Proposition 2.1 in
\cite{CN}, we can obtain the following property; see Lemma 1.1
below.
\begin{lemma}
Let $(M^{n},g,f,-1)$ be a complete gradient shrinking Ricci soliton
with $R\geq \delta >0$. Then for any $\eta>0$, there exists a constant
$C_{1}<\infty$ depending on $\eta$ and the soliton such that
\begin{center}
$V(B(o,r))\leq C_{1}(r+1)^{n-(2-\eta)\delta}$.
\end{center}
for all $r>0$.
\end{lemma}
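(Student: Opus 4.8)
The plan is to adapt the Cao--Zhou volume argument (as in the proof of Proposition~2.1 of \cite{CN}), now keeping the extra term produced by $R\geq\delta$. After adding a constant to $f$ we normalize so that $R+|\nabla f|^{2}=f$; taking the trace of the soliton equation gives $\Delta f=\frac n2-R$, and since $R\geq\delta>0$ we also get $f=R+|\nabla f|^{2}\geq\delta>0$ with $R\leq f$ everywhere. By the Cao--Zhou lower estimate $f(x)\geq\frac14(d(o,x)-c)^{2}$ (see \cite{CZ}) the function $f$ is proper, the sublevel sets $\Omega_{s}:=\{f<s\}$ are precompact, and combined with the polynomial volume bound of \cite{M} one gets the a priori estimate $v(s):=V(\Omega_{s})\leq C s^{n/2}$ for $s$ large. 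Put also $w(s):=\int_{\Omega_{s}}R\,dV$. Both $v$ and $w$ are non-decreasing and locally absolutely continuous, with $v'(s)=\int_{\{f=s\}}|\nabla f|^{-1}\,dA$ and $w'(s)=\int_{\{f=s\}}R|\nabla f|^{-1}\,dA$ for a.e.\ $s$ by the co-area formula.

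First I would derive a differential identity for $v$. Integrating $\Delta f=\frac n2-R$ over $\Omega_{s}$ and using the divergence theorem gives $\int_{\{f=s\}}|\nabla f|\,dA=\frac n2 v(s)-w(s)$. On a regular level set $R=s-|\nabla f|^{2}$, so
\begin{equation*}
w'(s)=\int_{\{f=s\}}\frac{s-|\nabla f|^{2}}{|\nabla f|}\,dA=s\,v'(s)-\Bigl(\tfrac n2 v(s)-w(s)\Bigr),
\end{equation*}
i.e.\ $s\,v'(s)=w'(s)-w(s)+\frac n2 v(s)$. Multiplying by the Gaussian-type weight $e^{-s}$, integrating from $0$ to $S$ (using $v(0)=w(0)=0$), and integrating $\int_{0}^{S}se^{-s}v'(s)\,ds$ by parts, one obtains the single relation
\begin{equation*}
\Phi(S):=\int_{0}^{S}\Bigl(1+\tfrac n2-s\Bigr)e^{-s}v(s)\,ds=e^{-S}\bigl(Sv(S)-w(S)\bigr).
\end{equation*}

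Next I would extract the growth rate from this. Since $R\geq\delta$ we have $w(S)\geq\delta v(S)$, hence $\Phi(S)\leq(S-\delta)e^{-S}v(S)$; the a priori bound $v(s)\leq Cs^{n/2}$ then forces $\Phi(S)\to0$, so $\Phi(S)=\int_{S}^{\infty}(s-1-\frac n2)e^{-s}v(s)\,ds$. Because $v$ is non-decreasing, $\Phi(S)\geq v(S)\int_{S}^{\infty}(s-1-\frac n2)e^{-s}\,ds=v(S)(S-\frac n2)e^{-S}$, which in particular is positive for large $S$. Differentiating, $\Phi'(S)=(1+\frac n2-S)e^{-S}v(S)$, and for $S>1+\frac n2$ the inequality $e^{-S}v(S)\geq\Phi(S)/(S-\delta)$ gives $\Phi'(S)\leq-\frac{S-1-n/2}{S-\delta}\Phi(S)$; integrating this Gronwall inequality yields $\Phi(S)\leq A_{0}e^{-S}(S-\delta)^{1+n/2-\delta}$ for $S$ large. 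Combining the upper and lower bounds for $\Phi$, $v(S)\leq A_{0}(S-\delta)^{1+n/2-\delta}/(S-\frac n2)\leq C' S^{n/2-\delta}$ for $S$ large.

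To finish, since $|\nabla(2\sqrt f)|^{2}=1-R/f\leq1$ the function $2\sqrt f$ is $1$-Lipschitz, so $B(o,r)\subseteq\{f<\frac14(r+2\sqrt{f(o)})^{2}\}$ and therefore $V(B(o,r))\leq v\bigl(\tfrac14(r+2\sqrt{f(o)})^{2}\bigr)\leq C'\bigl(\tfrac14(r+2\sqrt{f(o)})^{2}\bigr)^{n/2-\delta}\leq C_{1}(r+1)^{n-2\delta}$ for $r$ large; absorbing the bounded range of $r$ into $C_{1}$ gives the estimate for all $r>0$. Since $n-2\delta\leq n-(2-\eta)\delta$, this is in fact slightly stronger than the stated bound (one can take $\eta=0$). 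The step I expect to be the main obstacle is the passage from the coupled $(v,w)$-identity to the rate: choosing the weight $e^{-s}$ so as to collapse the system to a single inequality for $\Phi$, noticing that $\Phi(S)\to0$ so that $\Phi$ is a tail integral, running Gronwall on $\Phi$ (not on $v$) and only then using the monotonicity of $v$ to recover $v(S)\lesssim S^{n/2-\delta}$. The measure-theoretic points---absolute continuity of $v$ and $w$ and the validity of the co-area and divergence identities at regular levels---are routine.
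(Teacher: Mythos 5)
Your proof is correct (modulo the standard measure-theoretic caveats about critical values of $f$, which the paper also suppresses), but it takes a genuinely different route from the paper's and in fact proves more than the lemma asks. The paper works with the power weight $c^{-\frac{n-(2-\eta)\delta}{2}}$: from $\mathbf{R}(c)\geq\delta\mathbf{V}(c)$ it derives $\frac{d}{dc}\bigl(c^{-\frac{n-(2-\eta)\delta}{2}}\mathbf{V}(c)\bigr)\leq c^{-\frac{n+2-(2-\eta)\delta}{2}}\bigl(\mathbf{R}'(c)-\frac{\eta}{2}\mathbf{R}(c)\bigr)$, integrates by parts, and needs the resulting bulk term $\bigl(\frac{n+2-(2-\eta)\delta}{2}-\frac{\eta c}{2}\bigr)\mathbf{R}(c)$ to be nonpositive for $c\geq c_{0}$; that is exactly where $\eta>0$ is indispensable, and it is why the paper must later bootstrap (Theorem 1.2, applying this lemma with $\eta=1/\delta$) to reach the sharp exponent. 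Your exponential weight $e^{-s}$ instead collapses the $(v,w)$ system into the exact identity $\Phi(S)=e^{-S}\bigl(Sv(S)-w(S)\bigr)$ with $\Phi'(S)=(1+\tfrac n2-S)e^{-S}v(S)$, the Gronwall step costs nothing, and you land directly on $v(S)\leq C'S^{n/2-\delta}$ --- i.e.\ the paper's Main Theorem, not merely this lemma; the model $N^{k}\times\mathbb{R}^{n-k}$ shows your bound $\Phi(S)\leq A_{0}e^{-S}(S-\delta)^{1+n/2-\delta}$ is attained. Two minor points to tidy: (i) to conclude $\Phi(S)\to0$ you need the lower bound $\Phi(S)\geq(S-\tfrac n2)e^{-S}v(S)\geq0$ as well as the upper bound, and this should be drawn directly from the closed form $\Phi(S)=e^{-S}(Sv(S)-w(S))$ together with $w\leq\tfrac n2v$ \emph{before} you invoke the tail representation (as written, the lower bound is derived from the tail formula, which itself presupposes $\Phi(S)\to0$); (ii) you import the a priori bound $v(s)\leq Cs^{n/2}$ from \cite{M}, whereas the paper's argument is self-contained (its boundary term is absorbed into the left-hand side once $\overline{c}\geq n$) --- citing \cite{M} is legitimate here since the paper itself quotes it, but the absorption trick would make your proof independent of it.
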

We sharpen the above result as follows, which is our main
theorem.
\begin{theorem}\label{MainThm}
Let $(M^{n},g,f,-1)$ be a complete shrinking gradient Ricci soliton
with $R\geq \delta> 0$. Then given $o\in M^{n}$, there exists a constant
$C<\infty$ depending only on $\delta$, $o$ and the soliton such that
\begin{center}
$V(B(o,r))\leq C(r+1)^{n-2\delta}$
\end{center}
for all $r\geq 0$.
\end{theorem}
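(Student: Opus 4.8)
My plan is to deduce the ball estimate from an estimate on the sublevel sets $D(r):=\{f<r\}$, writing $V(r):=\mathrm{Vol}(D(r))$ and $\chi(r):=\int_{D(r)}R\,dV$. First I normalize the potential so that $R+|\nabla f|^2=f$ (so $f\ge R\ge\delta>0$) and recall the trace identity $\Delta f=\tfrac n2-R$. From $|\nabla f|^2=f-R\le f$ we get $|\nabla(2\sqrt f)|\le 1$, hence $f(x)\le\tfrac14\big(d(o,x)+2\sqrt{f(o)}\big)^2$; together with the standard properness of $f$ on complete shrinkers (the quadratic lower bound of Cao--Zhou), this gives $B(o,\rho)\subset D(r)$ for $r=\tfrac14(\rho+c)^2$, with each $D(r)$ precompact. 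So it suffices to prove $V(r)\le C\,r^{(n-2\delta)/2}$ for large $r$.

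The main analytic ingredient is an ODE for $V$. For almost every $r$ (regular values of $f$), $V$ and $\chi$ are absolutely continuous, with $V'(r)=\int_{\{f=r\}}|\nabla f|^{-1}\,dA$ and $\chi'(r)=\int_{\{f=r\}}R\,|\nabla f|^{-1}\,dA$ by the co-area formula. I would then evaluate $\int_{\{f=r\}}|\nabla f|\,dA$ in two ways. On one hand the divergence theorem on $D(r)$ gives $\int_{\{f=r\}}|\nabla f|\,dA=\int_{D(r)}\Delta f\,dV=\tfrac n2 V(r)-\chi(r)$. On the other hand, since $|\nabla f|=|\nabla f|^2/|\nabla f|=(r-R)/|\nabla f|$ on $\{f=r\}$, the co-area formula gives $\int_{\{f=r\}}|\nabla f|\,dA=rV'(r)-\chi'(r)$. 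Equating yields $rV'(r)-\chi'(r)=\tfrac n2 V(r)-\chi(r)$.

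The decisive step is to feed this identity into the auxiliary function $P(r):=rV(r)-\chi(r)=\int_{D(r)}(r-R)\,dV$, which is positive for large $r$. The identity becomes $P'(r)=V(r)+rV'(r)-\chi'(r)=(\tfrac n2+1)V(r)-\chi(r)$. Now $R\ge\delta$ is used twice: it gives $\chi(r)\ge\delta V(r)$, hence $P'(r)\le(\tfrac n2+1-\delta)V(r)$; and, separately, nonnegativity of the flux $\int_{\{f=r\}}|\nabla f|\,dA\ge0$ forces $\chi(r)\le\tfrac n2 V(r)$, hence $P(r)\ge(r-\tfrac n2)V(r)$, i.e. $V(r)\le P(r)/(r-\tfrac n2)$ once $r>n/2$. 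Combining, $P'(r)\le(\tfrac n2+1-\delta)\,\dfrac{P(r)}{r-n/2}$; integrating from some fixed $r_1>n/2$ gives $P(r)\le C(r-\tfrac n2)^{\frac n2+1-\delta}$, so $V(r)\le C(r-\tfrac n2)^{\frac n2-\delta}$, and substituting $r=\tfrac14(\rho+c)^2$ gives $V(B(o,\rho))\le C(\rho+1)^{n-2\delta}$.

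I expect the main work to be organizational rather than deep: justifying the co-area differentiation, the divergence theorem, and absolute continuity of $V,\chi$ for almost every $r$ (using Sard and the compactness of the critical set of $f$), and citing the quadratic lower bound for $f$ to ensure $D(r)$ is precompact. The one point I would be most careful about is that $\chi\le\tfrac n2 V$ is \emph{not} a pointwise bound $R\le n/2$ (which is false in general for shrinkers) but only reflects the sign of the boundary flux; it is precisely the pairing $\delta V\le\chi\le\tfrac n2 V$ that produces the sharp exponent $\tfrac n2-\delta$ for the sublevel sets, hence $n-2\delta$ for the balls.
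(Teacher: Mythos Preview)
Your argument is correct, and it reaches the same sharp exponent by a genuinely different and more streamlined route than the paper. Both proofs set up the sublevel quantities $V(c)=\mathrm{Vol}\{f\le c\}$ and $\chi(c)=\int_{\{f\le c\}}R$, derive the identity $cV'(c)-\chi'(c)=\tfrac n2 V(c)-\chi(c)$, and use the two integral inequalities $\delta V\le\chi\le\tfrac n2 V$; they diverge at the integration step. The paper rewrites the identity as a bound on $\tfrac{d}{dc}\big(c^{-(n-2\delta)/2}V(c)\big)$ with an error term $c^{-(n+2-2\delta)/2}\chi'(c)$, integrates by parts, and is left with $\int c^{-(n+4-2\delta)/2}\chi(c)\,dc$; controlling this integral forces a preliminary bootstrap step (Lemma~1.1, the Munteanu--Carrillo--Ni estimate with the slightly worse exponent $\tfrac{n-(2-\eta)\delta}{2}$), which is then fed back in. Your introduction of $P(r)=rV(r)-\chi(r)$ sidesteps this entirely: the identity becomes $P'=(\tfrac n2+1)V-\chi$, and the two integral bounds combine directly into $P'/P\le(\tfrac n2+1-\delta)/(r-\tfrac n2)$, a clean Gr\"onwall inequality yielding $V(r)\le C r^{(n-2\delta)/2}$ in one pass. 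So your approach is more elementary (no auxiliary lemma, no bootstrap), while the paper's has the virtue of making explicit how the result refines the existing literature.
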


\begin{remark}
Above result is sharp. For example, a product $M^n=N^k\times \mathbb{R}^{n-k}$($k=2,3,\cdots,n$), where $N^k$ is an
Einstein manifold with constant scalar curvature $\frac{k}{2}$ and take $f=\frac{|x|^2}{4}$ on $\mathbb{R}^{n-k}$, then
the equality in Theorem \ref{MainThm} holds.
\end{remark}

The following property for gradient Ricci solitons is the second
part of Theorem 1.3 in Z.-H. Zhang \cite{Z}. In fact, part 1 is a consequence of Corollary 2.5 in B.-L. Chen \cite{C1}.
\begin{theorem}
Let $(M^{n},g,f,\varepsilon)$ be a noncompact complete gradient
Ricci soliton.
\begin{enumerate}
\item If the gradient soliton is shrinking or steady, then $R\geq 0$.
\item If the gradient soliton is expanding, then there exists a positive constant $C(n)$ such that $R\geq -C(n)\varepsilon$.
\end{enumerate}
\end{theorem}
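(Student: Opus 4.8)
The plan is to pass from the soliton to its associated self-similar Ricci flow and then invoke B.-L.\ Chen's lower bound for the scalar curvature of complete solutions (Corollary 2.5 of \cite{C1}), which is exactly the device that runs the scalar-curvature minimum principle with no curvature-decay hypothesis at infinity. Recall the standard fact that $(M^{n},g,f,\varepsilon)$ generates the Ricci flow $g(t)=(1+\varepsilon t)\,\phi_{t}^{*}g$, where $\phi_{t}$ is the one-parameter family of diffeomorphisms generated by $\frac{1}{1+\varepsilon t}\nabla f$ with $\phi_{0}=\mathrm{id}$, defined on the maximal interval $\{t:1+\varepsilon t>0\}$; since scalar curvature scales inversely, $R(g(t))=\frac{1}{1+\varepsilon t}\,\phi_{t}^{*}R(g)$, so a lower bound for $R$ on any single time slice transfers to a lower bound for $R(g)$ itself.

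For part (1), when $\varepsilon\le 0$ the interval $\{1+\varepsilon t>0\}$ contains $(-\infty,0]$, so $g(t)$ is a complete ancient solution; Chen's estimate on $[a,0]$ gives $R=R(g(0))\ge -\frac{n}{2|a|}$, and letting $a\to-\infty$ yields $R\ge 0$. For part (2), when $\varepsilon>0$ the flow lives on $(-1/\varepsilon,\infty)$; after the harmless time-translation $\tilde g(s)=g(s-1/\varepsilon)$ it becomes a complete Ricci flow on $(0,\infty)$, Chen's estimate reads $R(\tilde g(s))\ge -\frac{n}{2(s-\delta)}$ on each $[\delta,T]$, and sending $\delta\to 0^{+}$ and evaluating at $s=1/\varepsilon$ (which is the original time $0$) gives $R\ge -\frac{n}{2}\varepsilon$, i.e.\ part (2) holds with $C(n)=\frac{n}{2}$. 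The one genuine obstacle throughout is non-compactness: one cannot naively minimize $R$ on a time slice because the curvature need not be bounded, and overcoming precisely this is the content of Chen's theorem (its proof via localized cutoffs and the strong maximum principle is where all the work lies).

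If one prefers to remain in the elliptic category, I would instead start from the soliton identity $\Delta_{f}R:=\Delta R-\langle\nabla f,\nabla R\rangle=-\varepsilon R-2|\mathrm{Ric}|^{2}$, obtained from the traced soliton equation $R+\Delta f=-\frac{n\varepsilon}{2}$ and the well-known soliton identity $\nabla R=2\,\mathrm{Ric}(\nabla f)$; combined with $|\mathrm{Ric}|^{2}\ge\frac{1}{n}R^{2}$ this gives $\Delta_{f}R\le -\varepsilon R-\frac{2}{n}R^{2}$. Feeding this into a weak maximum principle at infinity for the drift Laplacian $\Delta_{f}$ — evaluating along a sequence $x_{k}$ with $R(x_{k})\to\inf_{M}R$ and $\liminf_{k}\Delta_{f}R(x_{k})\ge 0$ — forces $\inf_{M}R\ge 0$ when $\varepsilon\le 0$ and $\inf_{M}R\ge -\frac{n}{2}\varepsilon$ when $\varepsilon>0$; moreover, if the infimum is attained one also gets $\mathrm{Ric}=\frac{R}{n}g$ at that point and, by the strong maximum principle, the Einstein rigidity. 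Here the delicate step is justifying the maximum principle at infinity, since $\mathrm{Ric}$ is not assumed bounded below; one expects to exploit the known structure of the potential function (proper with controlled gradient after normalization, by Cao--Zhou in the shrinking case) to produce the required sequence.
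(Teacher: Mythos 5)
Your proposal is correct and takes essentially the same route as the paper: you pass to the associated self-similar Ricci flow $g(t)=(1+\varepsilon t)\phi_t^*g$ and apply B.-L.\ Chen's bound $R\geq -\tfrac{n}{2(t-\alpha)}$ (ancient case for part (1), the interval $(-1/\varepsilon,\infty)$ for part (2), giving $C(n)=\tfrac{n}{2}$), which is exactly how the paper obtains its Corollary 2.4. Your alternative elliptic sketch likewise parallels the paper's Section 3 cutoff-function argument.
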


The following property is an improvement of part 2 of Theorem 0.4, which is the sharp estimate for
noncompact expanding gradient Ricci solitons. The compact case follows
from a direct application of the maximum principle; we know the
manifold is Einstein (see Proposition 9.43 in \cite{CLN}).
\begin{theorem}
\label{M1} Let $(M^{n},g,f,1)$ be a complete expanding gradient
Ricci soliton. Then $R\geq -\frac{n}{2}$. Furthermore, if there
exists a point $x_{0}\in M^{n}$ such that $R(x_{0})=-\frac{n}{2}$,
then $(M^{n},g)$ is Einstein, i.e., $R_{ij}=-\frac{1}{2}g_{ij}$.
\end{theorem}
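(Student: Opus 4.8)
The plan is to reduce the statement to an elliptic differential inequality for the scalar curvature on the soliton and then apply a maximum principle, using the lower bound for $R$ on expanding solitons established above. First I would record the standard identities: tracing $R_{ij}+\nabla_i\nabla_j f+\tfrac12 g_{ij}=0$ gives $\Delta f=-R-\tfrac n2$, while the contracted second Bianchi identity combined with the soliton equation yields $\nabla_i R=2R_{ij}\nabla^j f$, and one more divergence then gives
$$\Delta_f R:=\Delta R-\langle\nabla f,\nabla R\rangle=-R-2|\mathrm{Ric}|^2 .$$
Since $|\mathrm{Ric}|^2\ge \tfrac1n R^2$, setting $u:=R+\tfrac n2$ turns this into the key inequality
$$\Delta_f u\le u\Bigl(1-\tfrac{2u}{n}\Bigr),$$
in which equality at a point forces $\mathrm{Ric}=\tfrac Rn g$ there.

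For the bound $R\ge-\tfrac n2$, i.e. $u\ge 0$, I would argue by contradiction. By the lower bound for the scalar curvature of complete expanding solitons established above, $R$ — hence $u$ — is bounded below, so $-u$ is bounded above. The weighted manifold $(M,g,e^{-f}\,dV_g)$ of a complete gradient Ricci soliton has constant Bakry–Émery tensor $\mathrm{Ric}+\nabla^2 f=-\tfrac12 g$ and, via $R+|\nabla f|^2+f=\mathrm{const}$ together with the lower bound on $R$, at most quadratic decay of $f$ (hence linear growth of $|\nabla f|$); this is exactly what is needed for the weak (Omori–Yau type) maximum principle for $\Delta_f$ to hold. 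Applying it to $-u$: if $m:=\inf_M u<0$ there are points $x_k$ with $u(x_k)\to m$ and $\Delta_f u(x_k)>-1/k$, so the key inequality gives, in the limit, $0\le m\bigl(1-\tfrac{2m}{n}\bigr)$, which is impossible since $m<0$ makes the right side strictly negative. Thus $m\ge 0$.

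For the rigidity, assume $R(x_0)=-\tfrac n2$, so $u\ge 0$ with $u(x_0)=0$. Because $u\ge 0$ the key inequality improves to $\Delta_f u\le u$, i.e. $(\Delta_f-1)u\le 0$, and $-u$ attains its maximum value $0$ at the interior point $x_0$; the strong maximum principle then forces $u\equiv 0$ on the connected manifold $M$. Hence $R\equiv-\tfrac n2$, and substituting back into $\Delta_f R=-R-2|\mathrm{Ric}|^2$ gives $|\mathrm{Ric}|^2=\tfrac1n R^2$, the equality case of Cauchy–Schwarz, so $\mathrm{Ric}=-\tfrac12 g$.

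I expect the main obstacle to be the noncompact part of the second step, where $\inf_M R$ need not be attained: one must justify a weak maximum principle for the drift Laplacian $\Delta_f$ on the soliton. This is precisely where the soliton structure enters — the constancy of the Bakry–Émery tensor and the consequent growth control on $f$ and on the distance function — and if one prefers to avoid quoting such a principle it can be replaced by an explicit cutoff argument, parallel to the Chen–Yokota localization used in the parabolic proof.
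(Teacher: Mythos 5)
Your proposal is correct, but it reaches the theorem by a genuinely different route than the paper. For the lower bound $R\ge -\tfrac n2$, the paper gives two proofs: a parabolic one, which runs the associated Ricci flow $g(t)=\tau(t)\phi^*(t)g$ on $t\in(-\tfrac1\varepsilon,\infty)$ and applies the localized Chen--Yokota estimate $R\ge -\tfrac{n}{2(t-\alpha)}$ (Corollary 2.2), and a direct elliptic one (Section 3), which localizes $\Phi=\eta(r/c)R$ with an explicit cutoff, examines the compactly supported minimum of $\Phi$, and uses Perelman's Laplacian comparison for $\Delta r-\langle\nabla f,\nabla r\rangle$; neither needs $R$ to be bounded below a priori, since $\Phi$ has compact support and so attains its minimum automatically. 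You instead pass from $\Delta_f u\le u(1-\tfrac{2u}{n})$ (which is exactly the paper's Lemma 3.1(1) rewritten, with $|\mathrm{Ric}|^2\ge\tfrac1nR^2$) to a weak Omori--Yau maximum principle for the drift Laplacian. That works, but it carries two external dependencies the paper does not use: the prior bound $R\ge -C(n)$ (Theorem 0.4(2), needed so that $\inf_M u>-\infty$), and the validity of the weak maximum principle for $\Delta_f$, which you correctly reduce to $\mathrm{Ric}+\nabla^2f=-\tfrac12 g$ together with the at most linear growth of $|\nabla f|$ coming from $R+|\nabla f|^2+f=\mathrm{const}$ — this is a quotable Pigola--Rigoli--Setti-type result, but it is the one step you should either cite precisely or replace by the cutoff computation, as you yourself note. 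For the rigidity statement your argument is cleaner than the paper's: the paper converts to the Ricci flow, shows $\partial_t(\tau^2\tilde R)\ge\Delta(\tau^2\tilde R)$ and invokes the parabolic strong maximum principle, then returns to the soliton equation at $t=0$; you stay elliptic, noting $(\Delta_f-1)u\le 0$ once $u\ge0$, applying the Hopf strong minimum principle at $x_0$, and closing with the Cauchy--Schwarz equality case $|\mathrm{Ric}|^2=\tfrac1nR^2$ — the same final step as the paper's $|Rc+\tfrac{\varepsilon}{2}g|^2=0$. In short: your approach buys brevity and a purely elliptic treatment of both halves at the cost of importing a nontrivial maximum principle; the paper's buys self-containedness at the cost of the explicit cutoff and ODE-comparison machinery.
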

The first part of Theorem \ref{M1} is a consequence of Corollary 2.3 (i) in B.-L. Chen \cite{C1}(see Corollary 2.2 below); we also provide a direct (elliptic) proof of this sharp estimate.

\section{Volume growth of complete noncompact gradient Ricci solitons}
We consider the complete shrinking gradient Ricci solitons in this
section, i.e. $\varepsilon=-1$. Normalizing $f$, from (0.1) we have
\begin{equation}
R+|\nabla f|^{2}-f\equiv 0.
\end{equation}
Define
\begin{center}
$\textbf{V}(c)\doteqdot \int_{\{f\leq c\}}d\mu=Vol\{f\leq c\}$,
\end{center}
\begin{center}
$\textbf{R}(c)\doteqdot \int_{\{f\leq c\}}Rd\mu$.
\end{center}
By the co-area formula
\begin{center}
$\textbf{V}'(c)=\int_{\{f=c\}}\frac{1}{|\nabla f|}d\sigma,$
\end{center}
\begin{center}
$\textbf{R}'(c)=\int_{\{f=c\}}\frac{R}{|\nabla f|}d\sigma.$
\end{center}
Since $R\geq 0$ (see Corollary 2.3 below), we have
\begin{center}
$\textbf{R}(c)\geq 0$ and $\textbf{R}'(c)\geq 0$.
\end{center}
Integrating $R+\Delta f=\frac{n}{2}$ over $\{f\leq c\}$ yields
\begin{equation}
\begin{aligned}
\frac{n}{2}\textbf{V}(c)-\textbf{R}(c)=&\int_{\{f\leq c\}}\Delta fd\mu\\
=&\int_{\{f=c\}}\frac{\partial f}{\partial \nu}d\sigma\\
=&\int_{\{f=c\}}|\nabla f|d\sigma.
\end{aligned}
\end{equation}
In particular,
\begin{equation}
\frac{n}{2}\textbf{V}(c)\geq \textbf{R}(c).
\end{equation}
Since by (1.1),
\begin{center}
$|\nabla f|=\frac{|\nabla f|^{2}}{|\nabla f|}=\frac{f-R}{|\nabla
f|}.$
\end{center}
So we have
\begin{equation}
\frac{n}{2}\textbf{V}(c)-\textbf{R}(c)=c\textbf{V}'(c)-\textbf{R}'(c).
\end{equation}
That is
\begin{equation}
\frac{n}{2}\textbf{V}(c)-c\textbf{V}'(c)=\textbf{R}(c)-\textbf{R}'(c).
\end{equation}
\begin{lemma}
Let $(M^{n},g,f,-1)$ be a complete gradient shrinking Ricci soliton
with $R\geq \delta >0$, then for any $\eta >0$, there exist $c_{0},
C_{1}$ depending on $\eta$ and $\delta$, when $c\geq c_{0}$, we have
\begin{center}
$\textbf{V}(c)\leq C_{1}c^{\frac{n-(2-\eta)\delta}{2}}$.
\end{center}
\end{lemma}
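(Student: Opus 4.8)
The plan is to exploit the ODE-type relation (1.6), namely $\frac{n}{2}\mathbf{V}(c) - c\mathbf{V}'(c) = \mathbf{R}(c) - \mathbf{R}'(c)$, together with the curvature lower bound $R \geq \delta$. Since $R \geq \delta$ on all of $M^n$, integrating over $\{f \leq c\}$ gives $\mathbf{R}(c) \geq \delta\,\mathbf{V}(c)$, and similarly, using the co-area expressions, $\mathbf{R}'(c) \geq \delta\,\mathbf{V}'(c)$. Substituting the first of these into (1.6) would push in the wrong direction, so instead I would rewrite (1.6) as $c\mathbf{V}'(c) - \frac{n}{2}\mathbf{V}(c) = \mathbf{R}'(c) - \mathbf{R}(c)$ and bound the right-hand side. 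The point is that $\mathbf{R}(c) \geq \delta \mathbf{V}(c)$ is a genuine constraint we can feed back in: from (1.5), $\frac{n}{2}\mathbf{V}(c) - \mathbf{R}(c) = c\mathbf{V}'(c) - \mathbf{R}'(c) \geq 0$ is not quite immediate, but in any case the combination $\mathbf{R}'(c) - \mathbf{R}(c)$ is what controls the deviation of $\mathbf{V}$ from the pure power $c^{n/2}$.

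Concretely, I would first establish that $(\frac{n}{2} - \delta)\mathbf{V}(c) \geq c\mathbf{V}'(c) - \mathbf{R}'(c)$, obtained by inserting $\mathbf{R}(c) \geq \delta\mathbf{V}(c)$ into (1.5) (the $\mathbf{R}$ term, not the $\mathbf{R}'$ term). Then, discarding the favorable term $-\mathbf{R}'(c) \leq 0$ on the right would give $c\mathbf{V}'(c) \leq (\frac{n}{2} - \delta)\mathbf{V}(c)$, which already integrates to $\mathbf{V}(c) \leq C_1 c^{\frac{n}{2} - \delta}$ — in fact slightly better than what is claimed. But this too-strong conclusion signals a sign error, so the honest route is to keep track of the $\eta$: one uses the weaker bound $\mathbf{R}'(c) \geq \delta \mathbf{V}'(c)$ and the fact (from Lemma 1.2 or a bootstrap) that $\mathbf{R}(c)$ is within a controllable factor of $\mathbf{V}(c)$ only asymptotically, giving $(\frac{n}{2} - (2-\eta)\frac{\delta}{2})\mathbf{V}(c) \geq c\mathbf{V}'(c)$ for $c \geq c_0(\eta,\delta)$. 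Dividing by $c\mathbf{V}(c)$ and integrating from $c_0$ to $c$ yields $\log \mathbf{V}(c) \leq \log \mathbf{V}(c_0) + \frac{n - (2-\eta)\delta}{2}\log(c/c_0)$, which is exactly the claimed estimate $\mathbf{V}(c) \leq C_1 c^{\frac{n-(2-\eta)\delta}{2}}$.

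I expect the main obstacle to be getting the correct combination of (1.4), (1.5), (1.6) so that the extra $-\mathbf{R}'(c)$ term genuinely helps rather than needing to be crudely thrown away, and in particular justifying the asymptotic comparison between $\mathbf{R}(c)$ and $\delta\mathbf{V}(c)$ with the $\eta$-loss — this is presumably where Proposition 2.1 of \cite{CN}, invoked for Lemma 1.1, enters. A secondary technical point is that $\mathbf{V}$ need not be smooth (the level sets $\{f=c\}$ may contain critical points of $f$), so the "ODE" (1.6) should be read in an integrated or distributional sense, and the Grönwall-type argument applied to $\mathbf{V}(c)/c^{\frac{n-(2-\eta)\delta}{2}}$ requires that this ratio be monotone nonincreasing for large $c$, which follows from the differential inequality away from a measure-zero set of $c$. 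Once $c_0$ is chosen large enough (depending on $\eta$ and $\delta$) that the error terms are absorbed, absolute continuity of $\mathbf{V}$ suffices to run the integration.
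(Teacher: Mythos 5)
There is a genuine gap at the decisive step. You correctly diagnose that from (1.5) and $\mathbf{R}(c)\geq\delta\mathbf{V}(c)$ one only gets $c\mathbf{V}'(c)\leq(\tfrac{n}{2}-\delta)\mathbf{V}(c)+\mathbf{R}'(c)$, so the term $+\mathbf{R}'(c)\geq 0$ sits on the majorizing side and cannot be discarded. But your proposed repair does not close the argument: the bound $\mathbf{R}'(c)\geq\delta\mathbf{V}'(c)$ is a \emph{lower} bound on $\mathbf{R}'(c)$, and what you need to obtain a closed differential inequality for $\mathbf{V}$ is an \emph{upper} bound on $\mathbf{R}'(c)$ in terms of $\mathbf{V}$ or $\mathbf{V}'$ --- which is not available pointwise (on $\{f=c\}$ one only has $R\leq f=c$, giving the useless $\mathbf{R}'(c)\leq c\mathbf{V}'(c)$). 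The inequality $\bigl(\tfrac{n}{2}-\tfrac{(2-\eta)\delta}{2}\bigr)\mathbf{V}(c)\geq c\mathbf{V}'(c)$ that you then integrate is exactly the assertion to be proved at the differential level; you state it but never derive it, and you yourself flag this as ``the main obstacle.'' So the proposal is a plan around the true difficulty rather than a proof.

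The mechanism the paper uses, and which is missing from your write-up, is the following. Use only the fraction $\tfrac{2-\eta}{2}$ of the bound $\mathbf{R}\geq\delta\mathbf{V}$, so that (1.5) becomes $\tfrac{n-(2-\eta)\delta}{2}\mathbf{V}(c)-\tfrac{\eta}{2}\mathbf{R}(c)\geq c\mathbf{V}'(c)-\mathbf{R}'(c)$; after multiplying by the integrating factor $c^{-\frac{n+2-(2-\eta)\delta}{2}}$, the offending right-hand contribution is $c^{-\frac{n+2-(2-\eta)\delta}{2}}\bigl(\mathbf{R}'(c)-\tfrac{\eta}{2}\mathbf{R}(c)\bigr)$. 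Integrating the $\mathbf{R}'$ part by parts over $[c_0,\overline{c}]$ produces a boundary term plus $\int(\tfrac{n+2-(2-\eta)\delta}{2}-\tfrac{\eta c}{2})c^{-\frac{n+4-(2-\eta)\delta}{2}}\mathbf{R}(c)\,dc$, and the retained $-\tfrac{\eta}{2}\mathbf{R}$ term makes this integrand nonpositive once $c_0\geq\frac{n+2-(2-\eta)\delta}{\eta}$ (this is precisely where the $\eta$-loss and the choice of $c_0(\eta,\delta)$ enter, not an ``asymptotic comparison'' between $\mathbf{R}$ and $\delta\mathbf{V}$). The remaining boundary term $\overline{c}^{-\frac{n+2-(2-\eta)\delta}{2}}\mathbf{R}(\overline{c})$ is absorbed using $\mathbf{R}(\overline{c})\leq\tfrac{n}{2}\mathbf{V}(\overline{c})$ from (1.3) together with the extra factor $\overline{c}^{-1}$, for $\overline{c}\geq n$. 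Without this integration-by-parts absorption (or an equivalent device), the $\mathbf{R}'(c)$ term blocks the Gr\"onwall argument you describe.
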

\begin{proof}
For the sake of completeness, we provide the detailed proof. The
proof is similar to the proof of Theorem 1 in \cite{M}(or the proof
of Proposition 2.1 in \cite{CN}). If $\eta \geq 2$, this has done by
Theorem 1 in \cite{M}.

So we only consider $\eta <2$. Now using the positive lower bound
for $R$ and $\eta <2$, we have
\begin{equation}
\begin{aligned}
\frac{n-(2-\eta)\delta}{2}\textbf{V}(c)-\frac{\eta}{2}\textbf{R}(c)
\geq&
\frac{n}{2}\textbf{V}(c)-\textbf{R}(c)\\=&c\textbf{V}'(c)-\textbf{R}'(c).
\end{aligned}
\end{equation}
This implies
\begin{eqnarray*}
\frac{d}{dc}(c^{-\frac{n-(2-\eta)\delta}{2}}\textbf{V}(c))=&c^{-\frac{n+2-(2-\eta)\delta}{2}}(c\textbf{V}'(c)-\frac{n-(2-\eta)\delta}{2}\textbf{V}(c))\\
\leq&
c^{-\frac{n+2-(2-\eta)\delta}{2}}(\textbf{R}'(c)-\frac{\eta}{2}\textbf{R}(c)).
\end{eqnarray*}
Integrating this by parts on $[c_{0},\overline{c}]$ yields
\begin{eqnarray*}
\overline{c}^{-\frac{n-(2-\eta)\delta}{2}}\textbf{V}(\overline{c})-c_{0}^{-\frac{n-(2-\eta)\delta}{2}}\textbf{V}(c_{0})\leq&
\overline{c}^{-\frac{n+2-(2-\eta)\delta}{2}}\textbf{R}(\overline{c})-c_{0}^{-\frac{n+2-(2-\eta)\delta}{2}}\textbf{R}(c_{0})\\
&+
\int_{c_{0}}^{\overline{c}}(\frac{n+2-(2-\eta)\delta}{2}-\frac{\eta
c}{2})c^{-\frac{n+4-(2-\eta)\delta}{2}}\textbf{R}(c)dc.
\end{eqnarray*}
Since $\textbf{R}(c)\geq 0$, for $c_{0}\geq
\frac{n+2-(2-\eta)\delta}{\eta}$ we have
\begin{eqnarray*}
\overline{c}^{-\frac{n-(2-\eta)\delta}{2}}\textbf{V}(\overline{c})-c_{0}^{-\frac{n-(2-\eta)\delta}{2}}\textbf{V}(c_{0})\leq&
\overline{c}^{-\frac{n+2-(2-\eta)\delta}{2}}\textbf{R}(\overline{c})\\
\leq&
\frac{n}{2}\overline{c}^{-\frac{n+2-(2-\eta)\delta}{2}}\textbf{V}(\overline{c})
\end{eqnarray*}
the last inequality has used (1.3). Thus if $\overline{c}\geq
\max\{n,c_{0}\}, c_{0}\geq \frac{n+2-(2-\eta)\delta}{\eta}$, then
\begin{center}
$\textbf{V}(\overline{c})\leq
2c_{0}^{-\frac{n-(2-\eta)\delta}{2}}\textbf{V}(c_{0})\overline{c}^{\frac{n-(2-\eta)\delta}{2}}.$
\end{center}
So Lemma holds.
\end{proof}
\begin{theorem}
Let $(M^{n},g,f,-1)$ be a complete gradient shrinking Ricci soliton
with $R\geq \delta >0$, then there exists a positive constant $C$
depending only on $\delta$, $o$ and the soliton such that
\begin{center}
$V(B(o,r))\leq C(1+r)^{n-2\delta}$.
\end{center}
\end{theorem}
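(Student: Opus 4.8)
The plan is to upgrade the exponent $\tfrac{n-(2-\eta)\delta}{2}$ in Lemma 1.2 to $\tfrac{n-2\delta}{2}$ by feeding the conclusion of Lemma 1.2 back into the differential inequality (1.6). The starting point is identity (1.6), $\tfrac{n}{2}\mathbf{V}(c)-c\mathbf{V}'(c)=\mathbf{R}(c)-\mathbf{R}'(c)$, together with $\mathbf{R}(c)\ge 0$, $\mathbf{R}'(c)\ge 0$, and the pointwise bound $R\ge\delta$. The crude estimate used in Lemma 1.2 was $\mathbf{R}(c)\ge \delta\,\mathbf{V}(c)$; the improvement I would try is to get a sharper lower bound on $\mathbf{R}(c)$ from the volume decay just established. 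Concretely, since $R\ge\delta$ we have $\mathbf{R}'(c)=\int_{\{f=c\}}\tfrac{R}{|\nabla f|}\,d\sigma\ge \delta\,\mathbf{V}'(c)$, so integrating, $\mathbf{R}(c)\ge \mathbf{R}(c_0)+\delta(\mathbf{V}(c)-\mathbf{V}(c_0))\ge \delta\,\mathbf{V}(c)-\delta\,\mathbf{V}(c_0)$; the point is that the error term $\delta\,\mathbf{V}(c_0)$ is a genuine constant, not proportional to $\mathbf{V}(c)$, which is exactly what lets us recover the full $\delta$ rather than $(2-\eta)\delta/2$ — wait, that gives $(2-2\cdot 0)\delta/2\cdot 2$; let me instead organize it as follows.

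First I would substitute $\mathbf{R}(c)-\mathbf{R}'(c)\ge \delta\mathbf{V}(c)-\mathbf{R}'(c)$ into (1.6) and also use $\mathbf{R}'(c)\le \mathbf{R}'(c)$ trivially, obtaining $c\mathbf{V}'(c)-\tfrac{n}{2}\mathbf{V}(c)=\mathbf{R}'(c)-\mathbf{R}(c)\le \mathbf{R}'(c)-\delta\mathbf{V}(c)$, i.e. $c\mathbf{V}'(c)\le(\tfrac n2-\delta)\mathbf{V}(c)+\mathbf{R}'(c)$. Hmm, the stray $\mathbf{R}'(c)$ term is the obstacle. To handle it, I would integrate by parts exactly as in the proof of Lemma 1.2: compute $\tfrac{d}{dc}\big(c^{-(n-2\delta)/2}\mathbf{V}(c)\big)\le c^{-(n+2-2\delta)/2}\mathbf{R}'(c)$, integrate over $[c_0,\bar c]$, and move the derivative off $\mathbf{R}'$ via integration by parts, producing boundary terms $c^{-(n+2-2\delta)/2}\mathbf{R}(c)\big|_{c_0}^{\bar c}$ plus an integral of $\mathbf{R}(c)$ against $\tfrac{d}{dc}c^{-(n+2-2\delta)/2}$, which has a favorable sign for $c$ large since the exponent is negative. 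Then I bound the boundary term $\bar c^{-(n+2-2\delta)/2}\mathbf{R}(\bar c)\le \tfrac n2\bar c^{-(n+2-2\delta)/2}\mathbf{V}(\bar c)$ using (1.3), which can be absorbed into the left side once $\bar c\ge n$. This yields $\mathbf{V}(\bar c)\le C\,\bar c^{(n-2\delta)/2}$ for $\bar c$ large.

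Finally I would convert the level-set volume bound into a geodesic-ball bound. This is the standard dictionary from the proof of Lemma 1.1/Theorem 1.3: from (1.1), $|\nabla f|^2=f-R\le f$, so $|\nabla\sqrt f|\le\tfrac12$ away from the critical points, whence $\sqrt{f(x)}\le \sqrt{f(o)}+\tfrac12 d(o,x)$ (this uses that $f$ is proper and its sublevel sets are compact, which follows from Cao–Zhou type estimates $\tfrac14(d(o,x)-c)^2\le f(x)$); consequently $B(o,r)\subset\{f\le (\tfrac12 r+\sqrt{f(o)})^2\}\subset\{f\le c\}$ with $c\le C(1+r)^2$ for a suitable constant. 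Plugging this $c$ into $\mathbf{V}(c)\le C c^{(n-2\delta)/2}$ gives $V(B(o,r))\le C(1+r)^{n-2\delta}$, as claimed.

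The main obstacle I anticipate is controlling the stray $\mathbf{R}'(c)$ term after substituting the sharp lower bound $\mathbf{R}(c)\ge\delta\mathbf{V}(c)-\text{const}$: one cannot simply drop it, so the integration-by-parts trick (trading $\mathbf{R}'$ for $\mathbf{R}$ at the cost of boundary terms, then reabsorbing the boundary term via (1.3) for large $c$) is the crux, and I would need to check the sign of $\tfrac{d}{dc}c^{-(n+2-2\delta)/2}$ and choose $c_0,\bar c$ large enough (comparable to $n$) so all error terms have the right sign. A secondary technical point is making rigorous the passage from sublevel sets to balls, which requires knowing $f$ is proper with compact sublevel sets — this is available from the cited Cao–Zhou/Munteanu estimates and I would invoke it rather than reprove it.
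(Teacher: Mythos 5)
Your overall strategy is the paper's: derive $\tfrac{d}{dc}\bigl(c^{-\frac{n-2\delta}{2}}\mathbf{V}(c)\bigr)\le c^{-\frac{n+2-2\delta}{2}}\mathbf{R}'(c)$, integrate by parts, absorb the boundary term via (1.3), and pass from sublevel sets of $f$ to geodesic balls via the Cao--Zhou estimate. But there is a genuine gap at the crux of the argument. After integration by parts the leftover integral is $-\int_{c_{0}}^{\overline{c}}\mathbf{R}(c)\,\tfrac{d}{dc}\bigl(c^{-\frac{n+2-2\delta}{2}}\bigr)\,dc=+\tfrac{n+2-2\delta}{2}\int_{c_{0}}^{\overline{c}}c^{-\frac{n+4-2\delta}{2}}\mathbf{R}(c)\,dc$, which is \emph{nonnegative} (since $\mathbf{R}\ge 0$ and $n+2-2\delta>0$, the weight $c^{-\frac{n+2-2\delta}{2}}$ is decreasing, and the minus sign from integration by parts flips the sign). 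Your claim that this term ``has a favorable sign for $c$ large'' is therefore wrong: it sits on the right-hand side with a plus sign and cannot be dropped. It must instead be shown to be bounded uniformly in $\overline{c}$.

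That uniform bound is exactly where the bootstrap you announce in your opening sentence has to be executed, and you never execute it. The paper does it by taking $\eta=\tfrac{1}{\delta}$ in Lemma 1.2 to get $\mathbf{V}(c)\le C_{1}c^{\frac{n+1-2\delta}{2}}$, then using (1.3) to get $\mathbf{R}(c)\le\tfrac{n}{2}\mathbf{V}(c)$, so the integrand is $O(c^{-3/2})$ and $\int_{c_{0}}^{\infty}c^{-\frac{n+4-2\delta}{2}}\mathbf{R}(c)\,dc\le nC_{1}c_{0}^{-1/2}<\infty$. Without some such input the integral need not converge: using only the crude bound $\mathbf{V}(c)\le Cc^{n/2}$ the integrand is only $O(c^{-2+\delta})$, which diverges for $\delta\ge 1$. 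So the choice of $\eta$ (any $\eta<2/\delta$ works) is not a cosmetic detail but the step that closes the argument. Your first-paragraph attempt to sharpen $\mathbf{R}(c)\ge\delta\mathbf{V}(c)$ to $\mathbf{R}(c)\ge\delta\mathbf{V}(c)-\mathrm{const}$ is a red herring and plays no role; the final conversion from $\{f\le c\}$ to $B(o,r)$ is fine and matches the paper.
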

\begin{proof}
Since $R\geq \delta, \textbf{R}(c)\geq 0$, we have
\begin{eqnarray*}
\frac{n-2\delta}{2}\textbf{V}(c)\geq&
\frac{n}{2}\textbf{V}(c)-\textbf{R}(c)\\
=&c\textbf{V}'(c)-\textbf{R}'(c).
\end{eqnarray*}
Since $\textbf{R}'(c)\geq 0$, we have
\begin{eqnarray*}
\frac{d}{dc}(c^{-\frac{n-2\delta}{2}}\textbf{V}(c))=&c^{-\frac{n+2-2\delta}{2}}(c\textbf{V}'(c)-\frac{n-2\delta}{2}\textbf{V}(c))\\
\leq& c^{-\frac{n+2-2\delta}{2}}\textbf{R}'(c).
\end{eqnarray*}
Integrating this by parts on $[c_{0},\overline{c}]$ yields
\begin{equation}
\begin{aligned}
\overline{c}^{-\frac{n-2\delta}{2}}\textbf{V}(\overline{c})-c_{0}^{-\frac{n-2\delta}{2}}\textbf{V}(c_{0})\leq&
\int_{c_{0}}^{\overline{c}}c^{-\frac{n+2-2\delta}{2}}\textbf{R}'(c)dc\\
=&\overline{c}^{-\frac{n+2-2\delta}{2}}\textbf{R}(\overline{c})-c_{0}^{-\frac{n+2-2\delta}{2}}\textbf{R}(c_{0})
\\+&\frac{n+2-2\delta}{2}\int_{c_{0}}^{\overline{c}}c^{-\frac{n+4-2\delta}{2}}\textbf{R}(c)dc.
\end{aligned}
\end{equation}
By (1.3), we have
\begin{center}
$\int_{c_{0}}^{\overline{c}}c^{-\frac{n+4-2\delta}{2}}\textbf{R}(c)dc\leq
\frac{n}{2}\int_{c_{0}}^{\overline{c}}c^{-\frac{n+4-2\delta}{2}}\textbf{V}(c)dc$.
\end{center}
Let $\eta =\frac{1}{\delta}$ in Lemma 1.1, so when $c$ is large enough,
we have
\begin{center}
$\textbf{V}(c)\leq C_{1}c^{\frac{n+1-2\delta}{2}}$.
\end{center}
So
\begin{eqnarray*}
\int_{c_{0}}^{\overline{c}}c^{-\frac{n+4-2\delta}{2}}\textbf{R}(c)dc\leq&
\frac{nC_{1}}{2}\int_{c_{0}}^{\overline{c}}c^{-\frac{n+4-2\delta}{2}}c^{\frac{n+1-2\delta}{2}}dc\\
=&\frac{nC_{1}}{2}\int_{c_{0}}^{\overline{c}}c^{-\frac{3}{2}}dc\\
=& nC_{1}(c_{0}^{-\frac{1}{2}}-\overline{c}^{-\frac{1}{2}})\\
\leq& nC_{1}c_{0}^{-\frac{1}{2}}.
\end{eqnarray*}
Since (1.3) and $\textbf{V}(c)\geq 0$, $\delta \leq \frac{n}{2}$. So
\begin{equation}
\overline{c}^{-\frac{n-2\delta}{2}}\textbf{V}(\overline{c})-c_{0}^{-\frac{n-2\delta}{2}}\textbf{V}(c_{0})\leq
\frac{n}{2}\overline{c}^{-\frac{n+2-2\delta}{2}}\textbf{V}(\overline{c})+\frac{n+2-2\delta}{2}nC_{1}c_{0}^{-\frac{1}{2}}.
\end{equation}
Then same argument in the proof of Lemma 1.1, when $\overline{c}$ is
large enough, there exists a constant $C_{2}$ depending only on
$\delta$ such that
\begin{equation}
\textbf{V}(\overline{c})\leq
C_{2}\overline{c}^{\frac{n-2\delta}{2}}.
\end{equation}
By Theorem 1.1 of \cite{CZ} (or see \cite{C}), there exists a constant $C$ depending
only on $g$ and $o$ such that
\begin{equation}
\frac{1}{4}(r(x)-C)^{2} \leq f(x) \leq \frac{1}{4}(r(x)+C)^{2}
\end{equation}
where $r(x)$ denotes the distance from $x$ to $o$. Hence we obtain
the result.
\end{proof}

\section{Lower bound of scalar curvature for Ricci flow}
In this section, we observe that by a modification of B.-L.
Chen's theorem (Corollary 2.3(i) in \cite{C1}), we obtain a local lower bound estimate of the scalar curvature for the Ricci flow on complete
manifolds, we follow Yokota's argument
in Proposition A.3 in \cite{Y}.

\begin{theorem}
For any $0<\varepsilon<\frac{2}{n}$. Suppose $(M^{n},g(t)),\ \ t\in [\alpha,\beta]$ is a complete
solution to Ricci flow, $p\in M$, then there exist constants $C(p)$ depending on $p$ and the metrics $g(t) (t\in[\alpha,\beta])$ and $C$ such that when $c\geq C(p)$, we have
\begin{equation}
R(x,t)\geq -B\frac{e^{2AB(t-\alpha)}+1}{e^{2AB(t-\alpha)}-1}
\end{equation}
whenever $x\in B_{g(t)}(p,c), t\in (\alpha,\beta]$, where $A(\varepsilon)=\frac{2}{n}-\varepsilon,B(\varepsilon)=\frac{3C}{2\sqrt{A\varepsilon}c^{2}}$.
\end{theorem}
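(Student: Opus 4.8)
I would adapt B.-L.\ Chen's localized maximum principle (the argument behind Corollary~2.3(i) in \cite{C1}), organised as in Yokota's Proposition~A.3 in \cite{Y}: compare $R$ against an ODE barrier using a cutoff supported in $B_{g(t)}(p,c)$. Under the Ricci flow the scalar curvature satisfies
\[
\Big(\tfrac{\partial}{\partial t}-\Delta\Big)R=2|Ric|^{2}\ \ge\ \tfrac2n R^{2},
\]
and I would write $\tfrac2n=A+\varepsilon$ with $A=A(\varepsilon)=\tfrac2n-\varepsilon>0$, using $AR^{2}$ to drive the comparison and keeping the slack $\varepsilon R^{2}$ to absorb the error produced by the cutoff. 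For a parameter $B>0$ to be chosen, put
\[
\phi(t)=B\,\frac{e^{2AB(t-\alpha)}+1}{e^{2AB(t-\alpha)}-1}=B\coth\!\big(AB(t-\alpha)\big),
\]
the solution of the Riccati equation $\phi'=-A\phi^{2}+AB^{2}$ with $\phi(t)\to+\infty$ as $t\downarrow\alpha$ (so that no lower bound on $R$ at time $\alpha$ is needed) and $\phi(t)\to B$ as $t\to\infty$; note that as $c\to\infty$ (hence $B\to0$) this barrier tends to $\tfrac1{A(t-\alpha)}$, recovering Chen's global estimate. The goal is to prove $R(\cdot,t)\ge-\phi(t)$ on $B_{g(t)}(p,c)$.

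First I would localize. Choose $r_{0}>0$ and $K<\infty$ with $Ric\le(n-1)K$ on the precompact set $\bigcup_{t\in[\alpha,\beta]}B_{g(t)}(p,2r_{0})$ (possible since the flow is smooth on the compact time interval); these, together with $n$, enter only through the constant $C$ and the threshold $C(p)$ of the statement, and through nothing else about the flow. For $c\ge C(p)$ take a smooth cutoff $\eta(x,t)=\psi\big(d_{g(t)}(p,x)\big)$ with $\psi\equiv1$ on $[0,c]$, $\psi\equiv0$ on $[2c,\infty)$, $\psi'\le0$, $(\psi')^{2}\le\Lambda c^{-2}\psi$ and $|\psi''|\le\Lambda c^{-2}$ for an absolute constant $\Lambda$. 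By Perelman's distance distortion estimate (Lemma~8.3, applied with base point $p$, which uses curvature control only near $p$) together with the Laplacian comparison theorem, and Calabi's trick to legitimize the latter at the cut locus of $p$, I would bound $\big(\tfrac{\partial}{\partial t}-\Delta\big)\eta$ and $|\nabla\eta|^{2}$ by quantities controlled by $Cc^{-2}$ on the shell $\{c\le d_{g(t)}(p,\cdot)\le2c\}$, and identically zero on $\{d_{g(t)}(p,\cdot)\le c\}$.

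Next, the maximum principle. I would first assume $|Rm|$ bounded on $M^{n}\times[\alpha,\beta]$ (so the parabolic region $\{d_{g(t)}(p,x)\le2c\}$ is precompact and maxima are attained), removing this hypothesis at the end since the final bound does not involve it. Consider $u=\eta^{2}(-R-\phi)$ on $\{d_{g(t)}(p,x)\le2c,\ \alpha'\le t\le\bar t\}$ with $\alpha'$ near $\alpha$ and $\bar t\le\beta$: since $\phi\to+\infty$ at $\alpha$, for $\alpha'$ close enough $u\le0$ on the parabolic boundary (on $\{d=2c\}$ because $\eta=0$; on $\{t=\alpha'\}$ because $\phi(\alpha')\ge-\inf_{M}R(\cdot,\alpha')$). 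If $\max u>0$, it is attained at an interior point $(x_{0},t_{0})$ with $t_{0}>\alpha'$ and $0<d_{g(t_{0})}(p,x_{0})<2c$, where $\nabla u=0$, $\Delta u\le0$, $\partial_{t}u\ge0$. Expanding $\big(\tfrac{\partial}{\partial t}-\Delta\big)\big(\eta^{2}(-R-\phi)\big)$, substituting $\nabla(-R-\phi)=-2\eta^{-1}(-R-\phi)\nabla\eta$ (from $\nabla u=0$), the evolution inequality for $R$, and the Riccati equation $\phi'=-A\phi^{2}+AB^{2}$, the surviving cutoff contributions would be absorbed — the first-order one into $\varepsilon R^{2}$ by Young's inequality (the weight producing the factor $\sqrt{A\varepsilon}$ and the cutoff scale the factor $c^{-2}$), the remaining ones into the terms $\varepsilon\phi^{2}$ and $AB^{2}$ of the barrier — provided $B=\tfrac{3C}{2\sqrt{A\varepsilon}\,c^{2}}$. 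With this choice of $B$ the resulting inequality is contradictory, so $\max u\le0$, i.e.\ $R\ge-\phi$ on $\{d_{g(t)}(p,\cdot)\le c\}$; letting $\alpha'\downarrow\alpha$ gives the asserted estimate.

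The main obstacle, I expect, is the interaction of noncompactness with the time-dependent cutoff: one must control $\big(\tfrac{\partial}{\partial t}-\Delta\big)d_{g(t)}(p,\cdot)$ with no global curvature hypothesis — which is exactly what forces both the bounded-curvature reduction and the use of a distance distortion estimate that sees only the geometry near $p$ — and then one must carry out the absorption of the cutoff terms sharply enough that no additive defect survives and the constant $B$ comes out in precisely the stated closed form. A secondary nuisance is the routine Calabi-trick argument needed to apply Laplacian comparison at a point of the cut locus of $p$.
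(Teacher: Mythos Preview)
Your overall strategy coincides with the paper's --- Chen--Yokota localization via a compactly supported cutoff times $R$, followed by comparison with the Riccati ODE $q'=A(q^{2}-B^{2})$ --- and your variant (an explicit barrier $\phi(t)=B\coth(AB(t-\alpha))$ and the test function $u=\eta^{2}(-R-\phi)$, in place of the paper's backward ODE comparison of $Q_{m}(t)=\min_{x}\eta R$) is an equivalent reformulation. But one step does not go through as written. You assert that $(\partial_{t}-\Delta)\eta$ is controlled by $Cc^{-2}$ on the shell; with $\eta=\psi\big(d_{g(t)}(p,\cdot)\big)$ this quantity contains $\psi'\,(\partial_{t}-\Delta)d$, and Perelman's Lemma~8.3 bounds $(\partial_{t}-\Delta)d$ only from one side by a constant $C_{0}=\tfrac{5}{3}(n-1)r_{0}^{-1}$ independent of $c$, so this contribution is $O(c^{-1})$, not $O(c^{-2})$. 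The paper (following Yokota) handles this not by estimating the term but by arranging its \emph{sign}: it replaces $d$ by the time-shifted distance $\tilde r(x,t)=d_{g(t)}(x,p)+C_{0}(t_{0}-t)$, with the constant chosen so that Perelman's lemma gives $(\partial_{t}-\Delta)\tilde r\ge 0$ outside $B(p,r_{0})$. Then, at a point where $R<0$, the factors $\eta'\le 0$ and $R<0$ make $\tfrac{\eta'}{c}R\,(\partial_{t}-\Delta)\tilde r\ge 0$ and it is simply discarded, leaving only the genuine $O(c^{-2})$ errors coming from $(\eta')^{2}/\eta$ and $\eta''$ (for which the paper's quartic profile $(2-u)^{4}$ gives the $\eta^{1/2}$ weights that make the Young step clean). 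Without this device your absorption would produce $B\sim c^{-1}$ rather than the stated $B=\tfrac{3C}{2\sqrt{A\varepsilon}\,c^{2}}$; that would still suffice for the corollaries, but not for the theorem in the form asserted.

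A smaller point: your bounded-curvature reduction is both unnecessary and improperly removed. It is unnecessary because the cutoff already makes the relevant function compactly supported in $x$ for each $t$ (this is exactly why the paper needs no such hypothesis), so extrema are attained automatically; and ``remove the hypothesis at the end since the final bound does not involve it'' is not a valid argument in any case.
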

\begin{proof}
First we use the cutoff function in the proof of Proposition A.3
in \cite{Y}. Let $\eta:\mathbb{R}\rightarrow [0,1]$ be a
nonincreasing $\mathcal{C}^{2}$ function such that $|\eta'|$ and
$|\eta''|$ are bounded and $\eta(u)=1$ for any $u\in(-\infty,1]$,
$\eta(u)=0$ for any $u\in[2,\infty)$ and $\eta(u)=(2-u)^{4}$ for any
$u \in[\frac{3}{2},2]$. Then there exists a positive constant $C$
such that
\begin{eqnarray}
\frac{(\eta'(u))^{2}}{\eta(u)}\leq C\eta(u)^{\frac{1}{2}},\\
|\eta''(u)|\leq C\eta(u)^{\frac{1}{2}}.
\end{eqnarray}

Clearly we can choose a number $r_{0}\in (0,1)$, such that
\begin{center}
$Rc(g(t))\leq (n-1)r_{0}^{-2}$
\end{center}
in $B_{g(t)}(p,r_{0})$ for $t\in [\alpha,\beta]$. Let $C(p)=
r_{0}+\frac{5}{3}(n-1)r_{0}^{-1}(\beta-\alpha)$ and given any $c\geq C(p)$.

Given any time $t_{0}\in (\alpha,\beta]$ and suppose
that \begin{equation} R(p,t_{0})<0.
\end{equation}
Define
$Q:M\times[\alpha,t_{0}]\rightarrow \mathbb{R}$ by
\begin{equation}
Q(x,t)=\eta(\frac{\tilde{r}(x,t)}{c})R(x,t),
\end{equation}
where \begin{equation} \tilde{r}(x,t)\doteqdot
d_{g(t)}(x,p)+\frac{5}{3}(n-1)r_{0}^{-1}(t_{0}-t). \end{equation}
Then $Q(x,t)$ is a compactly support function.

By Lemma 8.3 (a) in \cite{P}, we have
\begin{equation}
(\frac{\partial}{\partial t}-\Delta)\tilde{r}(x,t)\geq 0
\end{equation}
whenever $d_{g(t)}(x,p)> r_{0},\ \ t\in [\alpha,t_{0}]$, in the
barrier sense. We have
\begin{eqnarray*}
(\frac{\partial}{\partial t}-\Delta)Q=&\eta(\frac{\partial}{\partial
t}-\Delta)R+\frac{\eta'}{c}R(\frac{\partial}{\partial
t}-\Delta)\tilde{r}\\
&-\frac{2}{c}\eta'<\nabla \tilde{r},\nabla R>-\frac{\eta''}{c^{2}}R
\end{eqnarray*}
where $\eta$ denotes $\eta(\frac{\tilde{r}}{c})$. In the case of
$d_{g(t)}(x,p)\leq r_{0}$, then $\tilde {r}(x,t)\leq c$, so at point
$(x,t)$, $\eta =1,\ \ \eta'=\eta''=0$, so
\begin{equation}
(\frac{\partial}{\partial t}-\Delta)Q=(\frac{\partial}{\partial
t}-\Delta)R.
\end{equation}
In the case of $d_{g(t)}(x,p)>r_{0}$,
then we applying (2.7) and $\eta'\leq 0$, we have at a point where
$R\leq 0$
\begin{equation}
(\frac{\partial}{\partial t}-\Delta)Q\geq
\eta\frac{2}{n}R^{2}-\frac{2\eta'}{c\eta}<\nabla \tilde{r},\nabla
Q>+\frac{1}{c^{2}}(2\frac{(\eta')^{2}}{\eta}-\eta'')R
\end{equation}
whenever $\eta \neq0$. Hence by both of cases, we have at point
$(x,t)$ where $R(x,t)\leq 0$, (2.9) holds whenever $\eta\neq0$.
Applying (2.2) and (2.3) to (2.9), we have at any point where $Q<0$
\begin{equation}
(\frac{\partial}{\partial t}-\Delta)Q\geq
\eta\frac{2}{n}R^{2}-\frac{2\eta'}{c\eta}<\nabla \tilde{r},\nabla
Q>+\frac{3C}{c^{2}}\eta^{\frac{1}{2}}R.
\end{equation}

Let $Q_{m}(t)=\min_{x\in M}Q(x,t)$. Then
\begin{center}
$Q_{m}(t_{0})\leq R(p,t_{0})<0$.
\end{center}
By (2.10) we have for any $t\in [\alpha,t_{0}]$ where $Q_{m}(t)<0$
and for any $x_{t}$ such that $Q(x_{t},t)=Q_{m}(t)$, then for any
$\varepsilon\in (0,\frac{2}{n})$
\begin{eqnarray*}
\frac{d_{-}}{dt}Q_{m}(t)\geq& \frac{2}{n}\eta
R(x_{t},t)^{2}+\frac{3C}{c^{2}}\eta^{\frac{1}{2}}R(x_{t},t)\\
\geq&(\frac{2}{n}-\varepsilon)\eta
R(x_{t},t)^{2}-\frac{9C^{2}}{4\varepsilon C^{4}}\\
\geq&(\frac{2}{n}-\varepsilon)Q_{m}^{2}-\frac{9C^{2}}{4\varepsilon
c^{4}}
\end{eqnarray*}
using $ab\geq -\varepsilon a^{2}-\frac{1}{4\varepsilon}b^{2}$ and
$0<\eta\leq 1$.

Recall that the solution of ODE
\begin{eqnarray*}
\begin{cases}
\frac{dq}{dt}&=A(q^{2}-B^{2}),\\
q(t_{0})&=q_{0}
\end{cases}
\end{eqnarray*}
on $[\alpha,t_{0}]$,
then
\begin{eqnarray*}
q(t)=\begin{cases}
-B\frac{De^{-2AB(t_{0}-t)}+1}{De^{-2AB(t_{0}-t)}-1} & \text{if $B\neq -q_{0}$},\\
q_{0} & \text{if $B=-q_{0}$}
\end{cases}
\end{eqnarray*}
where $D=\frac{q_{0}-B}{q_{0}+B}$ provided $B\neq -q_{0}$.

Taking $A=\frac{2}{n}-\varepsilon,\ \ B=\frac{3C}{2\sqrt{A\varepsilon}c^{2}}$, and $q_{0}=Q_{m}(t_{0})<0$, then we have
\begin{center}
$Q_{m}(t)\leq q(t).$
\end{center}
$q(t)>-\infty$ for $t\in [\alpha,t_{0}]$, since $Q_{m}(t)>-\infty$ for $t\in [\alpha,t_{0}]$.

\textbf{Case (1).} If $q_{0}\geq -B$, then we have
\begin{equation}
R(x,t_{0})\geq Q_{m}(t_{0})\geq -B
\end{equation}
whenever $x\in B_{g(t_{0})}(p,c)$, since $\eta=1$.

\textbf{Case (2).} If $q_{0}<-B$, then $D>1$ and since
$q(t)>-\infty$ for $t\in [\alpha,t_{0}]$, we have
\begin{center}
$De^{-2AB(t_{0}-t)}-1>0$
\end{center}
for all $t\in [\alpha,t_{0}]$, so
\begin{center}
$De^{-2AB(t_{0}-\alpha)}-1>0$ i.e., $q_{0}>
-B\frac{e^{2AB(t_{0}-\alpha)}+1}{e^{2AB(t_{0}-\alpha)}-1}$,
\end{center}
 so that
\begin{equation}
R(x,t_{0})\geq q_{0}>
-B\frac{e^{2AB(t_{0}-\alpha)}+1}{e^{2AB(t_{0}-\alpha)}-1}
\end{equation}
whenever $x\in B_{g(t_{0})}(p,c)$.

Since $B>0$, (2.12) is a better estimate, we conclude that (2.12)
holds in either case.

Since $c$ independent of $t$, we complete the proof of this theorem.
\end{proof}

Hence we also obtain a consequence of Corollary 2.3(i) in B.-L. Chen \cite{C1}.
\begin{corollary}
Suppose $(M^{n},g(t)),\ \ t\in[\alpha,\beta]$, is a complete solution to Ricci flow, then
\begin{equation}
R\geq -\frac{n}{2(t-\alpha)}
\end{equation}
on $M\times (\alpha,\beta]$.
\end{corollary}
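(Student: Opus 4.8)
The plan is to derive the corollary from Theorem 2.1 by a double limiting procedure: first send the radius $c\to\infty$, then send the auxiliary parameter $\varepsilon\to 0^{+}$. Fix an arbitrary point $x_{0}\in M$ and a time $t_{0}\in(\alpha,\beta]$, and fix any $\varepsilon\in(0,\tfrac{2}{n})$. Apply Theorem 2.1 with base point $p=x_{0}$: since $x_{0}\in B_{g(t_{0})}(x_{0},c)$ trivially, for every $c\geq C(x_{0})$ we obtain
\begin{equation*}
R(x_{0},t_{0})\ \geq\ -B\,\frac{e^{2AB(t_{0}-\alpha)}+1}{e^{2AB(t_{0}-\alpha)}-1},\qquad A=\tfrac{2}{n}-\varepsilon,\quad B=\frac{3C}{2\sqrt{A\varepsilon}\,c^{2}}.
\end{equation*}

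Next I would let $c\to\infty$ with $\varepsilon$ (hence $A$) held fixed, so that $B\to 0^{+}$. Writing $\tau=t_{0}-\alpha>0$ and expanding $e^{2AB\tau}=1+2AB\tau+O(B^{2})$ (equivalently, a single application of l'Hôpital's rule in $B$) gives the elementary limit
\begin{equation*}
\lim_{B\to 0^{+}} B\,\frac{e^{2AB\tau}+1}{e^{2AB\tau}-1}\ =\ \frac{1}{A\tau}.
\end{equation*}
Since the right-hand side of the lower bound above is continuous in $B$ on $(0,\infty)$ while its left-hand side does not depend on $c$, passing to the limit yields $R(x_{0},t_{0})\geq -\dfrac{1}{A(t_{0}-\alpha)}=-\dfrac{1}{(\frac{2}{n}-\varepsilon)(t_{0}-\alpha)}$.

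Finally, this inequality holds for every $\varepsilon\in(0,\tfrac{2}{n})$, so letting $\varepsilon\to 0^{+}$ gives $R(x_{0},t_{0})\geq -\dfrac{n}{2(t_{0}-\alpha)}$; as $x_{0}$ and $t_{0}$ were arbitrary, this is exactly (2.13). There is essentially no obstacle here: the two limits are taken successively rather than jointly, so no uniformity in $\varepsilon$ is required, and the only computation — the limit $B\frac{e^{2AB\tau}+1}{e^{2AB\tau}-1}\to \frac{1}{A\tau}$ — is routine. The one point to state carefully is that the base point $p$ should be taken to be the point $x_{0}$ at which the estimate is wanted (or, equivalently, one fixes $x_{0}$ first and notes it lies in $B_{g(t_{0})}(p,c)$ once $c$ is large), so that $C(p)$, which depends on $p$, stays a fixed finite number throughout the limit $c\to\infty$.
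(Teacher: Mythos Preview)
Your proof is correct and follows essentially the same approach as the paper: fix $\varepsilon$, let $c\to\infty$ so that $B\to 0$ and the right-hand side tends to $-\frac{1}{A(t_{0}-\alpha)}$, then let $\varepsilon\to 0$. The only cosmetic difference is that you set $p=x_{0}$ and argue pointwise, whereas the paper keeps a fixed base point and notes that the estimate covers all of $M$ once $c\to\infty$; both presentations are equivalent.
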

\begin{proof}
For any $t_{0}\in (\alpha,\beta]$. Now fix $\varepsilon\in (0,\frac{2}{n})$ and let $c\rightarrow
\infty$. Then $B\rightarrow 0$. Since
\begin{equation*}
\lim_{B\rightarrow 0}B\frac{e^{2AB(t_{0}-\alpha)}+1}{e^{2AB(t_{0}-\alpha)}-1}=\frac{1}{A(t_{0}-\alpha)}
\end{equation*}
and (2.12) independent of $c$, we obtain
\begin{center}
$R(x,t_{0})\geq -\frac{1}{(\frac{2}{n}-\varepsilon)(t_{0}-\alpha)}.$
\end{center}
on $M\times \{t_{0}\}$. Finally, taking $\varepsilon\rightarrow0$, we obtain
\begin{equation*}
R(x,t_{0})\geq -\frac{n}{2(t_{0}-\alpha)}.
\end{equation*}
on $M\times \{t_{0}\}$.
Since above argument holds for any $t\in (\alpha,\beta]$, we obtain the corollary.
\end{proof}

The following property is Corollary 2.5 in \cite{C1}( or Proposition A.3 in \cite{Y}).
\begin{corollary}
If $(M^{n},g(t)),\ \ t\in (-\infty,0],$ is a complete ancient
solution to the Ricci flow, then
\begin{center}
$R\geq 0$
\end{center}
on $M\times (-\infty,0]$.
\end{corollary}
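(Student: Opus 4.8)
The plan is to obtain this as the $\alpha\to-\infty$ limit of the preceding corollary (the estimate $R\geq-\frac{n}{2(t-\alpha)}$). First I would fix an arbitrary point $x\in M$ and an arbitrary time $t_{0}\in(-\infty,0]$; it suffices to prove $R(x,t_{0})\geq 0$. For each real number $\alpha<t_{0}$, the restriction of $g(t)$ to the interval $[\alpha,0]$ is still a complete solution to the Ricci flow, so the preceding corollary applies on $[\alpha,0]$ and gives
\[
R(x,t_{0})\geq -\frac{n}{2(t_{0}-\alpha)}.
\]
Since $\alpha<t_{0}$ was arbitrary, I would then let $\alpha\to-\infty$: the right-hand side tends to $0$, hence $R(x,t_{0})\geq 0$. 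As $(x,t_{0})$ was arbitrary, this yields $R\geq 0$ on $M\times(-\infty,0]$.

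The only thing that needs to be checked is that the hypotheses of the preceding corollary genuinely hold on each truncated interval $[\alpha,0]$, i.e. that $(M^{n},g(t))$ is a complete solution of the flow there; but this is immediate, since completeness of every time slice is part of the definition of an ancient solution and the evolution equation is unaffected by restriction to a subinterval. So there is no real obstacle here: the entire content sits in Theorem 2.1 and the sharp bound $R\geq-\tfrac{n}{2(t-\alpha)}$, and the present statement is simply its limiting form as the initial time recedes to $-\infty$. (Equivalently, one could first fix $\varepsilon$ and $c$, send $\alpha\to-\infty$ in (2.12) so that the time-dependent factor $\frac{e^{2AB(t_{0}-\alpha)}+1}{e^{2AB(t_{0}-\alpha)}-1}\to 1$, and only afterwards let $c\to\infty$ and $\varepsilon\to 0$; this produces the same conclusion.)
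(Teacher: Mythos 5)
Your proof is correct and is exactly the intended derivation: the paper states this corollary as an immediate consequence of the preceding one (citing Chen and Yokota for the original result), and the deduction is precisely to apply $R\geq-\frac{n}{2(t-\alpha)}$ on each truncated interval $[\alpha,0]$ and let $\alpha\to-\infty$. Your check that completeness persists on each subinterval is the only hypothesis that needs verifying, and you handle it correctly.
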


\begin{corollary}
Suppose $(M^{n},g,f,\varepsilon)$ be a noncompact complete gradient
Ricci soliton. Then
\begin{enumerate}
\item If the gradient soliton is shrinking or steady, then $R\geq 0$.
\item If the gradient soliton is expanding, then $R\geq -\frac{n\varepsilon}{2}$. Moreover, if the
scalar curvature attain the minimum value $-\frac{n\varepsilon}{2}$
at some point, then $(M^{n},g(t))$ is Einstein.
\end{enumerate}
\end{corollary}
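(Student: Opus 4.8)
The plan is to deduce both parts from the Ricci-flow estimates of Corollary 2.2 (the bound $R\geq-\frac{n}{2(t-\alpha)}$) and Corollary 2.3 (ancient solutions have $R\geq0$) by passing to the canonical self-similar solution of the soliton. Recall that a gradient Ricci soliton $(M^{n},g,f,\varepsilon)$ generates the solution $g(t)=(1+\varepsilon t)\phi_{t}^{*}g$ to the Ricci flow, where $\phi_{t}$ is the one-parameter family of diffeomorphisms with $\phi_{0}=\mathrm{id}$ generated by $\frac{1}{1+\varepsilon t}\nabla f$. Since $(M,g)$ is complete, $\nabla f$ is a complete vector field (Z.-H. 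Zhang \cite{Z}), so $g(t)$ is a complete Ricci flow on the maximal interval $I_{\varepsilon}=\{t:1+\varepsilon t>0\}$, and the scalar curvature scales by $R_{g(t)}(x)=\frac{1}{1+\varepsilon t}R_{g}(\phi_{t}(x))$; in particular $R_{g(t)}\geq0$ everywhere if and only if $R:=R_{g}\geq0$ everywhere, and $\min_{M}R_{g(t)}=\frac{1}{1+\varepsilon t}\min_{M}R$.

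For part (1), if $\varepsilon\leq0$ then $I_{\varepsilon}\supseteq(-\infty,0]$, so the restriction of $g(t)$ to $(-\infty,0]$ is a complete ancient solution; Corollary 2.3 gives $R_{g(t)}\geq0$, hence $R\geq0$. For the lower bound in part (2), let $\varepsilon>0$, so $I_{\varepsilon}=(-1/\varepsilon,\infty)$. For each $s\in(0,1/\varepsilon)$ the flow $g(t)$ is a complete solution on $[-1/\varepsilon+s,\,1]$, so Corollary 2.2 (with $\alpha=-1/\varepsilon+s$) gives $R_{g(t)}\geq-\frac{n}{2(t+1/\varepsilon-s)}$ for $t\in(-1/\varepsilon+s,\,1]$; evaluating at $t=0$ and letting $s\to0$ yields $R\geq-\frac{n\varepsilon}{2}$.

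For the rigidity claim, assume $\varepsilon>0$ and $R(x_{0})=-\frac{n\varepsilon}{2}$, so that $\min_{M}R=-\frac{n\varepsilon}{2}$; I would argue elliptically on the soliton. One has the identity $\Delta_{f}R=-\varepsilon R-2|Ric|^{2}$, where $\Delta_{f}u:=\Delta u-\langle\nabla f,\nabla u\rangle$; it is obtained by differentiating $R_{g(t)}(x)=\frac{1}{1+\varepsilon t}R_{g}(\phi_{t}(x))$ at $t=0$ and comparing with Hamilton's equation $\partial_{t}R=\Delta R+2|Ric|^{2}$. Setting $u:=R+\frac{n\varepsilon}{2}\geq0$ and using $|Ric|^{2}\geq R^{2}/n$ gives $\Delta_{f}u-\varepsilon u\leq-\frac{2}{n}u^{2}\leq0$, so $u$ is a nonnegative supersolution of the elliptic operator $\Delta_{f}-\varepsilon$ (whose zeroth-order coefficient $-\varepsilon$ is $\leq0$) attaining the value $0$ at the interior point $x_{0}$. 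By the strong maximum principle $u\equiv0$, i.e. $R\equiv-\frac{n\varepsilon}{2}$; then $\Delta_{f}R=0$, so the identity forces $|Ric|^{2}=\frac{n\varepsilon^{2}}{4}=R^{2}/n$, and the equality case of the Cauchy-Schwarz inequality yields $Ric=\frac{R}{n}g=-\frac{\varepsilon}{2}g$, i.e. $(M,g)$ is Einstein. Alternatively, one may run the parabolic strong maximum principle for $h(x,t):=R_{g(t)}(x)+\frac{n}{2(t+1/\varepsilon)}\geq0$ on $M\times(-1/\varepsilon,0]$, which satisfies $(\partial_{t}-\Delta)h\geq\frac{2}{n}h^{2}-\frac{2}{t+1/\varepsilon}h$ and vanishes at $(x_{0},0)$, to conclude $h\equiv0$ and hence the same.

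Most of this is bookkeeping with the self-similar solution and the scaling of $R$. The substantive points are the completeness of $g(t)$ (which rests on completeness of $\nabla f$ on a complete soliton), the use of the strong maximum principle on a noncompact manifold (legitimate since it is purely local), and --- in the rigidity step --- the passage from $R$ merely attaining its minimum to $R$ being identically $-\frac{n\varepsilon}{2}$; this last point is the heart of the matter and is precisely where the strong maximum principle (elliptic or parabolic) is indispensable. There is no separate rigidity statement to prove in the shrinking or steady case.
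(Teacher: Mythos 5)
Your proposal is correct. Parts (1) and the lower bound in part (2) follow the same route as the paper: pass to the canonical self-similar solution $g(t)=(1+\varepsilon t)\phi_t^*g$ and invoke Corollary 2.3 (ancient case) resp.\ Corollary 2.2 with $\alpha\to-1/\varepsilon$; your explicit $s\to0$ exhaustion of the open interval and your remark that completeness of $\nabla f$ is what guarantees $g(t)$ is a complete flow are small but genuine tightenings of points the paper leaves implicit. The rigidity step is where you diverge: the paper works parabolically, setting $\tilde R=R+\frac{n\varepsilon}{2\tau}$, deriving $\partial_t(\tau^2\tilde R)\geq\Delta(\tau^2\tilde R)$, applying the parabolic strong maximum principle to get $\tilde R(\cdot,0)\equiv0$, and only then using the elliptic identity $\Delta R+2|Rc|^2-\langle\nabla f,\nabla R\rangle+\varepsilon R=0$ to extract $Rc=-\frac{\varepsilon}{2}g$. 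You instead stay entirely elliptic: from that same identity and $|Rc|^2\geq R^2/n$ you get $\Delta_f u-\varepsilon u\leq-\frac{2}{n}u^2\leq0$ for $u=R+\frac{n\varepsilon}{2}\geq0$ (the algebra checks out), and the elliptic strong maximum principle at $x_0$ gives $u\equiv0$, after which the equality case of Cauchy--Schwarz finishes. This is cleaner --- it avoids re-entering the flow for the rigidity and needs only the local elliptic minimum principle --- while the paper's parabolic version gives the slightly stronger intermediate statement that $\tilde R(\cdot,t_0)\equiv0$ at every time slice where it vanishes once. Your sketch of the parabolic alternative for $h(x,t)=R_{g(t)}+\frac{n}{2(t+1/\varepsilon)}$ is in fact the paper's argument in different notation. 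No gaps.
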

\begin{proof}
Part (1) is a consequence of Corollary 2.3.

As shown in Theorem 4.1 of \cite{CLN}, associated to the metric and
the the potential function $f$, there exists a family of metrics
$g(t)$, a solution to Ricci flow $\frac{\partial}{\partial
t}g(t)=-2Ric(g(t))$, with the property that $g(0)=g$, and a family
of diffeomorphisms $\phi(t)$, which is generated by the vector field
$X=\frac{1}{\tau}\nabla f$, such that $\phi(0)=id$, and
$g(t)=\tau(t)\phi^{*}(t)g$ with $\tau(t)=1+\varepsilon t>0$, as well
as $f(x,t)=\phi^{*}(t)f(x)$.

For expanding gradient Ricci soliton, i.e. $\varepsilon>0$. We know
$t\in (-\frac{1}{\varepsilon},\infty)$, so by Corollary 2.2 we obtain
$R(x,t)\geq -\frac{n}{2(t+\frac{1}{\varepsilon})}$, i.e., $R(x,t)\geq
-\frac{n\varepsilon}{2\tau}$.

Let
$\tilde{R}=R+\frac{n\varepsilon}{2\tau}$, so $\tilde{R}\geq 0$ and
\begin{equation}
\begin{aligned}
\frac{\partial}{\partial t}\tilde{R}=&\Delta
R+2|Rc|^2-\frac{n\varepsilon^{2}}{2\tau^{2}}\\
=&\Delta\tilde{R}+2|Rc-\frac{R}{n}g|^2+\frac{2R^2}{n}-\frac{n\varepsilon^{2}}{2\tau^{2}}\\
=&\Delta
\tilde{R}+2|Rc-\frac{R}{n}g|^2+\frac{2}{n}\tilde{R}(\tilde{R}-\frac{n\varepsilon}{\tau})\\
\geq&\Delta\tilde{R}-\frac{2\varepsilon}{\tau}\tilde{R}.
\end{aligned}
\end{equation}
So
\begin{equation}
\frac{\partial}{\partial t}(\tau^2\tilde{R})\geq \Delta
(\tau^2\tilde{R}).
\end{equation}
By strong maximum principle (Theorem 6.54 in \cite{CLN}), we know
that if there exists a point $x_{0}$ such that
$\tilde{R}(x_{0},t_{0})=0$ for some $t_{0}>-\frac{1}{\varepsilon}$,
then $\tilde{R}(x,t_{0})\equiv 0$ for all $x\in M$. So
\begin{center}
$R(x,t_{0})\equiv -\frac{n\varepsilon}{2\tau(t_{0})}$.
\end{center}
So when $t_{0}=0$, i.e. $\tau(0)=1$, then
$R(x,0)\equiv-\frac{n\varepsilon}{2}$. From Lemma 3.1 (1) below (or see section 4.1 in \cite{CLN}), we have
\begin{equation}
\Delta R+2|Rc|^{2}-<\nabla f,\nabla R>+\varepsilon R=0.
\end{equation}
So by (2.16) we know
\begin{center}
$|Rc+\frac{\varepsilon}{2}g|^{2}=0$. \end{center} So
\begin{center}
$R_{ij}=-\frac{\varepsilon}{2}g_{ij}.$
\end{center}
Hence $(M,g,f,\varepsilon)$ is Einstein.
\end{proof}

In next section, we will provide a direct (elliptic) proof of the first part of
Theorem \ref{M1}.

\section{Direct proof for expanding Ricci solitons}
In this section, we provide a direct (elliptic) proof of lower bound for
scalar curvature for complete expanding gradient Ricci solitons.  We use a cutoff function argument to equation (0.1).
\begin{lemma}
Let $(M^{n},g,f,\varepsilon)$ be a complete gradient Ricci soliton.
Fix $o\in M^{n}$, and define $r(x)\doteqdot d(x,o)$, then the
following
hold
\begin{enumerate}
\item $\Delta R+2|Rc|^{2}-<\nabla f,\nabla R>+\varepsilon R=0.$
\item Suppose $Ric\leq (n-1)K$ on $B(o,r_{0})$, for some positive
numbers $r_{0}$ and $K$. Then for any point $x$, outside
$B(o,r_{0})$
\begin{center}
$(\Delta r-<\nabla f,\nabla r>)(x)\leq -<\nabla f,\nabla
r>(o)+\frac{\varepsilon}{2}r(x)+(n-1)\{\frac{2}{3}Kr_{0}+r_{0}^{-1}\}$.
\end{center}
\end{enumerate}
\end{lemma}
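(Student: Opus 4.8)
The plan is to derive part (1) purely algebraically from the soliton equation $(0.1)$ together with the contracted second Bianchi identity, and then to derive part (2) by a standard Laplacian-comparison-type argument combined with the soliton structure. For part (1), I would start from $R_{ij} + \nabla_i\nabla_j f + \tfrac{\varepsilon}{2} g_{ij} = 0$. Tracing gives $R + \Delta f + \tfrac{n\varepsilon}{2} = 0$, so $\Delta f$ is constant up to $-R$. Taking the divergence of the soliton equation in the index $i$ and using $\nabla^i R_{ij} = \tfrac12 \nabla_j R$ (contracted Bianchi) together with the commutation formula $\nabla^i\nabla_i\nabla_j f = \nabla_j \Delta f + R_{jk}\nabla^k f$, one obtains the standard identity $\nabla_j R = 2 R_{jk}\nabla^k f$, i.e. $\tfrac12 \nabla R = Rc(\nabla f, \cdot)$. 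Then I would differentiate this once more, or alternatively invoke the well-known evolution identity for $R$ under Ricci flow adapted to the soliton (as the excerpt itself references via (2.16) and section 4.1 of \cite{CLN}); contracting and substituting $\nabla_i\nabla_j f = -R_{ij} - \tfrac{\varepsilon}{2}g_{ij}$ wherever second derivatives of $f$ appear yields $\Delta R + 2|Rc|^2 - \langle\nabla f,\nabla R\rangle + \varepsilon R = 0$. This is routine tensor algebra; the only care needed is tracking the sign convention $\varepsilon$ in $(0.1)$.

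For part (2), the idea is to compare the drift-Laplacian $\Delta_f r \doteqdot \Delta r - \langle \nabla f, \nabla r\rangle$ along a minimizing geodesic from $o$ to $x$. Fix a unit-speed minimizing geodesic $\gamma:[0,r(x)]\to M$ with $\gamma(0)=o$, $\gamma(r(x))=x$. Along $\gamma$ one has the first-variation/second-variation bound for $\Delta r$: for $s > r_0$, splitting the integral of Ricci along $\gamma$ into the part on $[0,r_0]$ (where $Rc \le (n-1)K$) and using the standard index-form estimate, one gets $\Delta r(\gamma(s)) \le (n-1)\left(\tfrac23 K r_0 + r_0^{-1}\right) - \int_{0}^{s} Rc(\gamma',\gamma')\,dt + (\text{boundary terms that vanish})$. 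Now I would use the soliton equation contracted twice against $\gamma'$: $Rc(\gamma',\gamma') = -\nabla^2 f(\gamma',\gamma') - \tfrac{\varepsilon}{2} = -\tfrac{d^2}{dt^2}\big(f\circ\gamma\big) - \tfrac{\varepsilon}{2}$, so $\int_0^{s} Rc(\gamma',\gamma')\,dt = -\big(\langle\nabla f,\nabla r\rangle(x) - \langle\nabla f,\nabla r\rangle(o)\big) - \tfrac{\varepsilon}{2} s$ (reading $\langle \nabla f,\nabla r\rangle$ at the two endpoints as the derivative of $f\circ\gamma$). Substituting and rearranging, the term $\langle\nabla f,\nabla r\rangle(x)$ moves to the left to form $\Delta_f r(x)$, the term $+\tfrac{\varepsilon}{2} r(x)$ appears, and $\langle\nabla f,\nabla r\rangle(o)$ contributes with a minus sign, giving exactly the claimed inequality.

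The main obstacle is the analytic justification at points where $r$ is not smooth (cut locus): the inequality $(\Delta r - \langle\nabla f,\nabla r\rangle)(x) \le \cdots$ should be understood in the barrier (or support/viscosity) sense, which is the standard Calabi trick — replace $r$ by $r_\epsilon(y) = \epsilon + d(y,\gamma(\epsilon))$ for small $\epsilon$, which is smooth near $x$ and lies above $r$ with equality at $x$, prove the estimate for $r_\epsilon$, and let $\epsilon\to 0$. I would note this explicitly but not belabor it. A secondary point is making sure the integrated identity $\int_0^s \nabla^2 f(\gamma',\gamma')\,dt = \tfrac{d}{dt}(f\circ\gamma)\big|_0^s$ is used correctly as the difference of the radial derivatives of $f$ at the two endpoints — this is just the fundamental theorem of calculus along $\gamma$ — and that $\nabla^2 f(\gamma',\gamma') = (f\circ\gamma)''$ since $\gamma$ is a geodesic. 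Everything else is bookkeeping.
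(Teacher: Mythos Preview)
Your proposal is correct and follows precisely the approach the paper itself indicates: the paper does not give a detailed proof of Lemma~3.1 but simply remarks that part~(1) is well known and that part~(2) ``follows from an idea of Perelman; see Lemma~8.3 in \cite{P}'' (with a pointer to \cite{C} for details). Your sketch for part~(1) via the contracted Bianchi identity and the identity $\nabla_j R = 2R_{jk}\nabla^k f$, and for part~(2) via the index-form estimate with the Perelman test function $\phi(t)=t/r_0$ on $[0,r_0]$, $\phi\equiv 1$ on $[r_0,s]$, combined with integrating $Rc(\gamma',\gamma') = -(f\circ\gamma)'' - \tfrac{\varepsilon}{2}$ along the geodesic, is exactly the standard argument these references contain; the only cosmetic point is that your ``boundary terms that vanish'' are not really boundary terms but rather the result of rewriting $\int_{r_0}^{s}Rc(\gamma',\gamma')\,dt$ as $\int_0^s - \int_0^{r_0}$ and absorbing the latter into the $\tfrac{2}{3}Kr_0$ constant.
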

Part (1) is well known. Part (2) follows from an idea of Perelman;
see Lemma 8.3 in \cite{P} and its antecedent in \S17 on 'Bounds on
changing distances', in \cite{H1}. For the detailed proof of part 2, also see \cite{C}.

Now we prove the first part of Theorem \ref{M1}.

\begin{proof}
For expanding gradient Ricci solitons, let $\varepsilon=1$, so that
(1) in Lemma 1.1 is
\begin{equation}
\Delta R+2|Rc|^{2}-<\nabla f,\nabla R>+R=0.
\end{equation}
If $M^{n}$ were closed, by Proposition 9.43 in \cite{CLN}, we know
$(M^{n},g,f,1)$ is Einstein. So we only consider the noncompact
case. Fix $o\in M^{n}$ and fix a large number $b$. Let
\begin{center}
$\eta:[0,\infty)\rightarrow [0,1]$
\end{center}
be a $\mathcal{C}^{\infty}$ nonincreasing cutoff function with
$\eta(u)=1$ for $u\in [0,1]$ and $\eta(u)=0$ for $u\in
[1+b,\infty)$. Define $\Phi : M\rightarrow \mathbb{R}$ by
\begin{center}
$\Phi(x)=\eta(\frac{r(x)}{c})R(x)$
\end{center}
for $c\in (0,\infty)$. Later we shall take $c\rightarrow \infty$.

We have
\begin{center}
$\Delta\Phi=\eta\Delta R+\frac{2\eta'}{c}<\nabla r,\nabla
R>+(\frac{\eta'}{c}\Delta r+\frac{\eta''}{c^{2}})R.$
\end{center}
We have dropped $'\circ\frac{r}{c}'$ in our notation. By (3.1), we
have
\begin{equation}
\begin{aligned}
\Delta \Phi =&\eta(-2|Rc|^{2}+<\nabla f, \nabla
R>-R)+\frac{2}{c}\eta'<\nabla r, \nabla
R>+(\frac{\eta'}{c}\Delta r+\frac{\eta''}{c^{2}})R\\
=&<\nabla f,\nabla \Phi>+\frac{2}{c}\frac{\eta'}{\eta}<\nabla r,
\nabla
\Phi>-(\frac{2}{c^{2}}\frac{(\eta')^{2}}{\eta}+\frac{\eta'}{c}<\nabla
f,\nabla r>)R\\
&+\eta(-2|Rc|^2-R)+(\frac{\eta'}{c}\Delta r+\frac{\eta''}{c^2})R
\end{aligned}
\end{equation}
at all points where $\eta\neq 0$.

Suppose $x_{0}\in M$ is such that
\begin{equation}
\Phi(x_{0})=\min_{M}\Phi<0.
\end{equation}
Since $R(x_{0})<0$, at $x_{0}$ we have
\begin{equation}
0\geq \eta(-\frac{2}{n}R-1)+\frac{\eta'}{c}(\Delta r-<\nabla
f,\nabla r>)+\frac{1}{c^2}(\eta''-2\frac{(\eta')^2}{\eta}).
\end{equation}
We consider two cases, depending on the location of $x_{0}$.

\textbf{Case (i).} Suppose $r(x_{0})<c$, so that
$\eta(\frac{r(x_{0})}{c})=1$ in a neighborhood of $x_{0}$. Then
(3.4) and (3.3) imply
\begin{eqnarray*}
0\geq& -\frac{2}{n}R(x_{0})-1\\
=&-\frac{2}{n}\Phi(x_{0})-1\\
\geq&-\frac{2}{n}\eta(\frac{r(x)}{c})R(x)-1
\end{eqnarray*}
for all $x\in M$. This implies the desired estimate
\begin{equation}
R(x)\geq -\frac{n}{2}
\end{equation}
for all $x\in B(o,c)$ since $\eta(\frac{r}{c})=1$ in $B(o,c)$.

\textbf{Case (ii).} Now suppose $r(x_{0})\geq c$ and again consider
(3.4). Note that we may choose $\eta$ so that
\begin{equation}
\eta''-2\frac{(\eta')^2}{\eta}\geq -C_{2}
\end{equation}
for some universal constant $C_{2}<\infty$. Since $\eta'\leq 0$,
applying Lemma 3.1 (2) and (3.6) to (3.4) yields for all $x\in M$
\begin{equation}
\begin{aligned}
\frac{2}{n}\Phi(x)\geq& \frac{2}{n}\Phi(x_{0})\\
\geq&\frac{\eta'(\frac{r(x_{0})}{c})}{c}(\frac{n-1}{r_{0}}-<\nabla
f,\nabla r>(o)+\frac{1}{2}r(x_{0})+\frac{2}{3}r_{0}
\max_{\overline{B(o,r_{0})}}Rc)
\\&-\eta(\frac{r(x_{0})}{c})-\frac{C_{2}}{c^2}
\end{aligned}
\end{equation}
where $C_{2}$ independent of $c$. Taking $r_{0}=1$ and $c\geq 2$, we
have for all $x\in B(o,c)$
\begin{equation}
\begin{aligned}
\frac{2}{n}R(x)\geq& \frac{\eta'(\frac{r(x_{0})}{c})}{c}(n-1+|\nabla
f|(o)+\frac{1}{2}r(x_{0})+\frac{2}{3}\max_{\overline{B(o,1)}}Rc)\\
&-\eta(\frac{r(x_{0})}{c})-\frac{C_{2}}{c^2}
\end{aligned}
\end{equation}
Since $-C_{2}\leq \eta'\leq 0$ imply that for all $x\in B(o,c)$
\begin{equation}
\begin{aligned}
\frac{2}{n}R(x)\geq& -\frac{C_{2}}{c}(n-1+|\nabla
f|(o)+\frac{2}{3}\max_{\overline{B(o,1)}}Rc+\frac{1}{c})\\
&+\frac{1}{2}\eta'(\frac{r(x_{0})}{c})\frac{r(x_{0})}{c}-\eta(\frac{r(x_{0})}{c}).
\end{aligned}
\end{equation}
When take $c\rightarrow \infty$, then the first term of right hand
side of (3.9) tends to $0$. So we only consider to estimate the term
$\frac{1}{2}\eta'(\frac{r(x_{0})}{c})\frac{r(x_{0})}{c}-\eta(\frac{r(x_{0})}{c})$.
Since $x_{0}\in B(o,(1+b)c)-B(o,c)$, we have $1\leq
\frac{r(x_{0})}{c}<1+b$. Define $h_{\eta}(u)$ by
\begin{center}
$h_{\eta}(u)=\frac{1}{2}\eta'(u)u-\eta(u)$.
\end{center}
So we only estimate $h_{\eta}(u)$ for $u\in [1,1+b]$.

If we replace $\eta$ with nonnegative piecewise linear function
$\theta(u)$ such that
\begin{eqnarray*}
\theta(u)=\begin{cases}
1 & \text{if $u\in [0,1]$},\\
\frac{1+b-u}{b} & \text{if $u\in [1,1+b]$},\\
0 & \text{if $u\in [1+b,\infty)$}
\end{cases}
\end{eqnarray*}
then $h_{\theta}(u)=-\frac{2b+2-u}{2b}$ for $u\in [1,1+b]$. So
$h_{\theta}(u)\geq -1$ for $u\in [2,b]$ and $h_{\theta}(u)\geq
-1-\frac{1}{b}$ for $u \in [1,2]$. For any small positive number
$\delta$, we can obtain a $\mathcal{C}^{\infty}$ cutoff function
$\beta$ after smooth the linear function $\theta$ such that
$\beta(u)=\theta(u)$ for $u\in [0,1]\cup[2,b]\cup[1+b,\infty)$ and
$-\frac{1+\delta}{b}\leq \beta'(u)\leq 0$ for $u\in
[1,2]\cup[b,1+b]$. So when $b$ is large and $\delta\leq \frac{b-1}{b+1}$, we have $h_{\beta}(u)\geq -1-\frac{1+\delta}{b}$ for $
u\in [1,1+b]$. Let $\eta$ equal
$\beta$, take $c\rightarrow \infty, \delta\rightarrow 0,
b\rightarrow \infty$, by (3.9) we obtain
\begin{center}
$\frac{2}{n}R(x)\geq -1$
\end{center}
for all $x\in M$. So
\begin{equation}
R(x)\geq -\frac{n}{2}
\end{equation}
for all $x\in M$.
\end{proof}
\section*{Acknowledgements}
The author would like to thank Professor Fuquan Fang and Professor
Lei Ni for their encouragement and constant help. He would also like
to thank Professor Ben Chow for encouragement and many helpful discussions and suggestions.
\bibliographystyle{amsplain}

\end{document}